\let\mathcal\mathscr
\numberwithin{equation}{section}
\newtheorem{theorem}{Theorem}[section] 
\newtheorem{lemma}[theorem]{Lemma}
\newtheorem{proposition}[theorem]{Proposition}
\newtheorem{corollary}[theorem]{Corollary}
\theoremstyle{definition}
\newtheorem*{acknowledgements}{Acknowledgements}
\newtheorem{remark}[theorem]{Remark}
\newtheorem{definition}[theorem]{Definition}
\newtheorem{notation}{Notation}
\newcommand{\e}{\mathrm{e}}
\newcommand{\li}{\mathrm{li}}
\renewcommand{\l}{\left}
\renewcommand{\r}{\right}
\title[Counting primes with a given primitive root in a progression]{Uniformly counting primes with a given primitive root and in an arithmetic progression}
\author{M. Zoeteman}
\email{m.d.zoeteman@umail.leidenuniv.nl}
\subjclass[2010]{11A07,11N05}
\date{\today}
\begin{document}

\begin{abstract}
We study the number of primes with a given primitive root and in an arithmetic progression under the assumption of a suitable form of the generalized Riemann Hypothesis. Previous work of Lenstra, Moree and Stevenhagen has given asymptotics without an explicit error term, we provide an explicit error term by combining their work with the method of Hooley regarding Artin's primitive root conjecture. We give an application to a Diophantine problem involving primes with a given primitive root.
\end{abstract}

\maketitle

\setcounter{tocdepth}{1}
%\tableofcontents
 
 \section{Introduction}
 The Artin conjecture on primitive roots states that for $g \in \mathbb{Z}$ not a perfect square and not equal to $-1$, there are infinitely many primes $p$ such that $g$ is a primitive root modulo $p$. There is no $g$ for which the conjecture is currently known. In 1967, Hooley \cite{Hooleyarticle} proved Artin's conjecture under the assumption of a suitable form of the generalised Riemann Hypothesis and gave an asymptotic for the number of such primes $p \leq x$. In 1977, Lenstra \cite{lenstra77} proved, also under the assumption of some form of the generalized Riemann Hypothesis, that the primes $p \equiv a \mod m$ with $\langle g \rangle = \mathbb{F}_p^*$ have a natural density, denoted by $\delta(a,m,g)$. Later, more work was done by Lenstra, Moree and Stevenhagen (\cite{lenstrastevenhagenmoree}, \cite{moree2008}) to study its expression. In our main result, the existence of the natural density is promoted to an asymptotic with an explicit error term. Let $\pi_g(x;m,a)$ be the number of primes $p \leq x$ with primitive root $g$ that satisfy $p \equiv a \mod m$. We write GRH$(L,g)$ for the hypothesis that for each squarefree $k \in \mathbb{N}$ all Hecke-$L$-functions of the number field $\mathbb{Q}(\zeta_k, \sqrt[k]{g})$ satisfy the Riemann Hypothesis, where $\zeta_k \in \mathbb{C}$ denotes a $k$-th primitive root of unity. In our main result we prove under GRH$(L,g)$ the following asymptotic.
 \begin{theorem} \label{thm:a.p.'s in a.p.'ssecondversion}
Let $g$ be an integer not equal to a perfect square and not equal to $-1$, and assume GRH$(L,g)$. For all $a,m \in \mathbb{N}$ with $\gcd (a,m)=1$, we have
 $$\pi_g\l(x;m,a\r)= \delta\l(a,m,g\r)  \frac{x}{\log x} +O \l(\frac{1}{\phi(m)} \frac{x }{\log x} \frac{ (\log |g|)  \max \l\{ \log (2m), \log \log x  \r\}}{\log x} \r),$$
 where the implied constant in the error term is absolute and where $\phi$ denotes Euler's totient function.
 \end{theorem}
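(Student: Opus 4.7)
The plan is to combine Hooley's inclusion-exclusion approach with the effective Chebotarev density theorem under GRH, and to bookkeep carefully how each error term depends on $m$ and $|g|$. I would begin by Möbius-inverting the primitive root condition on $p$, writing
$$
\pi_g(x;m,a) = \sum_{k\geq 1} \mu(k)\, N_k(x;m,a),
$$
where $N_k(x;m,a)$ counts primes $p \leq x$ with $p \equiv a \pmod m$, $k \mid p-1$, and $g$ a $k$-th power residue modulo $p$. These three conditions together form a Chebotarev condition in the compositum $L_k := \mathbb{Q}(\zeta_{\mathrm{lcm}(k,m)}, g^{1/k})$, whose degree is $\phi(\mathrm{lcm}(k,m))\cdot k$ up to a bounded entanglement factor depending on $g$, and whose discriminant satisfies $\log|d_{L_k}| \ll [L_k:\mathbb{Q}]\log(km|g|)$.

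Next, I would introduce cutoffs $\xi_1 < \xi_2 < \xi_3$ depending on $x$, $m$, $|g|$ and split the Möbius sum accordingly. For $k \leq \xi_1$, GRH$(L,g)$ applies to all Hecke $L$-functions of $L_k$ and the effective Chebotarev density theorem yields
$$
N_k(x;m,a) = \frac{c_k(a,m,g)}{[L_k:\mathbb{Q}]}\,\li(x) + O\l(\sqrt{x}\bigl(\log x + \log|d_{L_k}|\bigr)\r),
$$
with $c_k(a,m,g)\in\{0,1\}$ encoding the compatibility between $p \equiv a\pmod m$ and the trivial splitting in $\mathbb{Q}(\zeta_k,g^{1/k})$. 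Summing main terms over $k\leq\xi_1$ and invoking the Lenstra--Moree--Stevenhagen identity for $\delta(a,m,g)$ yields the target $\delta(a,m,g)\,x/\log x$ plus a main-term tail bounded by $\ll x/(\phi(m)\,\xi_1\,\log x)$.

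For the medium range $\xi_1 < q \leq \xi_2$ I would apply the Brun--Titchmarsh inequality to the modulus $qm$ in order to bound the number of $p \leq x$ with $p \equiv a \pmod m$ and $q \mid p-1$; summing over primes $q$ via Mertens' theorem costs an extra factor $\log(\xi_2/\xi_1) \asymp \log\log x$ on top of the AP saving $1/\phi(m)$. For the large range $q > \xi_2$, I would exploit the divisibility $p \mid g^{(p-1)/q} - 1$ to count directly, as in Hooley. With the classical choices $\xi_2 \asymp \sqrt{x}/\log^2 x$ and $\xi_3 \asymp \sqrt{x}\log x$ these contributions are both $\ll x\log\log x/(\phi(m)\log^2 x)$.

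The main obstacle, and the source of the $\max\{\log(2m),\log\log x\}$ factor in the final error, is the optimization of $\xi_1$. The Chebotarev error summed over $k\leq\xi_1$ scales roughly as $\sqrt{x}\,\phi(m)\,\xi_1^3\log(\xi_1 m|g|x)$, which forces $\xi_1$ to shrink as $m$ grows, while the main-term tail $x/(\phi(m)\,\xi_1\,\log x)$ demands $\xi_1$ as large as possible. Balancing gives $\xi_1 \asymp \log x$ in the small-$m$ regime (reproducing Hooley's error) and $\xi_1 \asymp \log x/\log(2m)$ in the $m$-dominated regime, producing the two branches of the $\max$. The factor $\log|g|$ tracks through the discriminant bound $\log|d_{L_k}|\ll[L_k:\mathbb{Q}]\log(km|g|)$ and the entanglement of the radical extensions $\mathbb{Q}(\zeta_k,g^{1/k})$ among themselves and with $\mathbb{Q}(\zeta_m)$.
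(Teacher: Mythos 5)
Your overall architecture (Hooley's inclusion--exclusion, effective Chebotarev under GRH via the fields $\mathbb{Q}(\zeta_{[m,k]},g^{1/k})$, the Lenstra--Moree--Stevenhagen identity for the density, and $m$-uniform bookkeeping) is the right one and matches the paper. But there is a genuine gap in how you assign methods to the ranges of primes $q$. You propose Brun--Titchmarsh for $\xi_1<q\leq\xi_2$ and the divisibility trick $p\mid g^{(p-1)/q}-1$ for $q>\xi_2$. Neither works in those ranges. Brun--Titchmarsh on the modulus $qm$ only detects the conditions $p\equiv a\bmod m$ and $q\mid p-1$, so each prime $q$ contributes $\gg x/(q\,\phi(m)\log x)$, and by Mertens the sum over $\xi_1<q\leq\xi_2$ with $\xi_1\asymp\log x$ and $\xi_2\asymp\sqrt{x}$ is $\asymp\log\bigl(\log\xi_2/\log\xi_1\bigr)\asymp\log\log x$; this yields $x\log\log x/(\phi(m)\log x)$, which is a factor $\log x$ too large --- it is comparable to the main term, not to the claimed error. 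The point of Hooley's argument is that this middle range must be handled by the conditional Chebotarev estimate for each individual $q$, because the extra condition that $g$ be a $q$-th power residue saves another factor of $q$, turning $\sum 1/q$ into $\sum 1/q^{2}\ll 1/(\xi_1\log\xi_1)$. Brun--Titchmarsh is only usable on $(\xi_2,\xi_3]$ with both endpoints near $\sqrt{x}$, where $\log(\log\xi_3/\log\xi_2)\ll(\log m+\log\log x)/\log x$. Likewise the divisibility argument gives roughly $\log|g|\,x^{2}/\xi^{2}$ (divided by $\log(2m)$ after exploiting the progression), so it cannot start at $\xi_2\asymp\sqrt{x}/\log^{2}x$ (that would give $x(\log x)^{4}$, worse than trivial); it must start at $\xi_3\gg\sqrt{x}(\log x)^{2}m$.

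Two further points. First, your account of where $\max\{\log(2m),\log\log x\}$ comes from is not right: one does not shrink $\xi_1$ as $m$ grows. The paper keeps $\xi_1=\tfrac16\log x$; the $\log(2m)$ arises from (i) the primes $q\mid m$, which cannot be combined with $m$ by CRT in the Brun--Titchmarsh and Chebotarev ranges and contribute $\ll\frac{\log m}{\log\xi_2}\cdot\frac{x}{\phi(m)\log x}$, (ii) the tail $\sum_{P^{+}(k)>\xi_1}$ of the density series, which carries a factor $\log(2m)$, and (iii) the $m$-dependence of $\xi_2,\xi_3$ forced by keeping $m\xi_3\leq x^{1-A}$. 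You will need to treat the primes $q\mid m$ separately throughout. Second, the unrestricted Möbius inversion $\pi_g=\sum_{k\geq1}\mu(k)N_k$ is not legitimate as written; one must work with the finite inclusion--exclusion over squarefree $k$ with $P^{+}(k)\leq\xi_1$ (so $k\leq4^{\xi_1}$), bound the resulting truncated count from below and above by the $M$-type quantities, and only then complete the density sum.
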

 \begin{remark} After Theorem \ref{thm:moree2008}, we show that if the density $\delta(a,m,g)$ is positive, then $\delta(a,m,g) \gg \frac{1}{\phi(m) \log \log m}$, so the error term is dominated by the main term even when $m \leq (\log x)^{\sqrt{\log x}}$.
 \end{remark}
 This is the first time in the literature that the error term in Lenstra's result is made explicit. Let us now compare our result with the classical Siegel--Walfisz theorem for primes in an arithmetic progression. 
 If $m \leq (\log x)^N$, for some fixed $N>0$, then Theorem \ref{thm:a.p.'s in a.p.'ssecondversion} gives an asymptotic with worse error term with respect to $x$ than the Siegel--Walfisz theorem. In the range where
 $(\log x)^{f(x)} < m < x^{F(x)}$, for some functions $f(x)$ and $F(x)$ such that $f(x) \to \infty$ and $F(x)  = O \l( \frac{1}{\log \log x} \r)$ as $x \to \infty$, our result gives an asymptotic, whereas the Siegel--Walfisz result for primes without a primitive root restriction is not applicable. \\\\
The uniformity of the error term on $m$ is important in applications, since it means that the error term gets smaller when the modulus of the progression increases.
In the proof of our result, we will adapt Hooley's method \cite{Hooleyarticle} regarding the Artin primitive root conjecture, by modifying his functions $\xi _i(x)$ to depend also on $m$. Furthermore, we shall draw upon the works of Lenstra, Moree and Stevenhagen on the density $\delta(a,m,g)$.

\subsection{Overview of earlier results}
 We now give the precise statements of the results proved by Hooley, Lenstra, Moree and Stevenhagen mentioned in this introduction.
  \begin{notation}
The letters $p$ and $q$ will always denote a prime. Let $\mathcal{G}$ be the set of integers not equal to $-1$ or a perfect square. The notation $\l( \frac{\cdot}{\cdot} \r)$ is used for the Kronecker symbol. For integers $a$ and $m$, we write $(a,m)$ for $\gcd(a,m)$, and $[a,m]$ for lcm$(a,m)$.  For $k \in\mathbb{N}$, we denote by $\zeta_k$ a primitive $k$-th root of unity.
\end{notation}
 
 For $g \in \mathcal{G}$, write GRH$(g)$ for the hypothesis that for each squarefree $k \in \mathbb{N}$, the Dedekind zeta function of the number field $\mathbb{Q}(\zeta_k, \sqrt[k]{g})$ satisfies the Riemann hypothesis.

\begin{notation}
For $g \in \mathcal{G}$, let $\pi_g(x)$ denote the number of primes $p \leq x$ with primitive root $g$. If $a$ and $m$ are positive integers, let
$\pi_g(x;m,a)$ be the number of primes $p \leq x$ with primitive root $g$ that satisfy $p \equiv a \mod m$.
\end{notation}
In 1967, Hooley \cite{Hooleyarticle} proved under assumption of GRH$(g)$ that there are infinitely many primes with a prescribed primitive root $g$.
In fact, he proved the following asymptotic, where we write $g=g_1g_2^2$ with $g_1$ squarefree.
\begin{theorem} \cite{Hooleyarticle}
Let $g \in \mathcal{G}$ and assume GRH$(g)$. Let $h$ be the largest integer $n$ for which $g$ is an $n$-th power and define
$$C(h) \colonequals  \prod \limits_{ \substack{p|h }} \l(1- \frac{1}{p-1} \r)  \prod \limits_{ \substack{p \nmid  h }} \l(1- \frac{1}{p(p-1) } \r).$$
Then, if $g_1 \not \equiv 1 \mod 4$, we have
$$\pi_g(x)= C(h) \frac{x}{\log x}+ O \l( \frac{x \log \log x}{(\log x)^2} \r),$$
and if $g_1 \equiv 1 \mod 4$, we have
$$\pi_g(x)= C(h) \l( 1- \frac{\mu(|g_1|)}{\prod \limits_{\substack{p|h \\ p|g_1}}(p-2) \prod \limits_{\substack{p \nmid h \\ p|g_1}}(p^2-p-1) }    \r)  \frac{x}{\log x}+ O \l( \frac{x \log \log x}{(\log x)^2} \r).$$
\end{theorem}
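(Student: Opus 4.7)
The plan is to carry out Hooley's original argument: reduce the primitive-root condition to a Galois-theoretic statement, apply Möbius inversion, and invoke effective Chebotarev under GRH$(g)$. The starting observation is that a prime $p\nmid g$ has $g$ as a primitive root if and only if, for every prime $q\mid p-1$, $p$ fails to split completely in $K_q := \mathbb{Q}(\zeta_q,\sqrt[q]{g})$. Writing $N_g(x,k)$ for the number of primes $p\leq x$ splitting completely in the Kummer extension $K_k := \mathbb{Q}(\zeta_k,\sqrt[k]{g})$, inclusion--exclusion yields the formal identity
\begin{equation*}
\pi_g(x) \;=\; \sum_{k \text{ squarefree}} \mu(k)\, N_g(x,k),
\end{equation*}
which needs to be truncated.

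Introduce Hooley's cutoffs $\xi_1 := \tfrac{1}{6}\log x$, $\xi_2 := \sqrt{x}/(\log x)^2$, and $\xi_3 := \sqrt{x}\log x$, and bound separately the ``exceptional'' primes $p\leq x$ that witness the $q$-th power residue condition for some prime $q > \xi_1$. In the main range (squarefree $k$ with all prime factors $\leq \xi_1$) the effective Chebotarev bound of Lagarias--Odlyzko gives under GRH$(g)$
\begin{equation*}
N_g(x,k) \;=\; \frac{\li(x)}{[K_k:\mathbb{Q}]} \;+\; O\l(\sqrt{x}\log(kx)\r)
\end{equation*}
uniformly in $k$; the error sums acceptably over such $k$, and completing $\sum_k \mu(k)/[K_k:\mathbb{Q}]$ to all squarefree $k$ costs only $O(1/\log x)$ in the coefficient. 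The exceptional primes with $q\in(\xi_1,\xi_2]$ are handled by the same Chebotarev bound summed against $\sum 1/(q\varphi(q)) \ll 1/\xi_1$; those with $q\in(\xi_2,\xi_3]$ by the Brun--Titchmarsh inequality applied to $p\equiv 1\pmod q$ combined with Mertens' theorem; and those with $q > \xi_3$ by the elementary observation that $(p-1)/q < \sqrt{x}/\log x$ forces $p\mid g^{(p-1)/q}-1$, of which there are at most $O\l(\sum_{n\leq \sqrt{x}/\log x} n\log|g|\r) = O(x/(\log x)^2)$ admissible pairs. Each range contributes $O(x\log\log x/(\log x)^2)$.

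The main obstacle is the evaluation of the main term, i.e.\ computing $[K_k:\mathbb{Q}]$ for squarefree $k$ and summing the resulting Möbius series. Writing $h$ for the largest integer for which $g$ is a perfect $h$-th power, the generic degree formula is $[K_k:\mathbb{Q}] = k\varphi(k)/\gcd(k,h)$; a deviation occurs precisely when $\mathbb{Q}(\sqrt{g_1})\subset \mathbb{Q}(\zeta_k)$, which by quadratic-field theory happens for squarefree $k$ exactly when $g_1\equiv 1\pmod 4$ and $|g_1|\mid k$, in which case the degree halves. Factoring $\sum_k \mu(k)/[K_k:\mathbb{Q}]$ as an Euler product, separating primes dividing $h$ from those not, delivers $C(h)$ in the first case of the theorem; in the second case the extra contributions from exceptional $k$ (with $|g_1|\mid k$) collapse, after a secondary Möbius inversion over the primes dividing $g_1$, into the stated correction factor involving $\mu(|g_1|)$ and the products over $p\mid h$ resp.\ $p\nmid h$ with $p\mid g_1$.
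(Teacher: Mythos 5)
The paper does not prove this theorem; it cites Hooley's 1967 result as background, and the closest in-text analogue is the proof of Theorem~\ref{thm:a.p.'s in a.p.'ssecondversion}, which adapts exactly the framework you describe (cutoffs $\xi_1,\xi_2,\xi_3$, effective Chebotarev under GRH, Brun--Titchmarsh, and the elementary bound via divisors of $g^{2t}-1$). Your treatment of the three error ranges is sound and matches that proof.

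There is, however, a genuine error in your main-term computation. You assert that the degree $[K_k:\mathbb{Q}]$ drops below the generic value $k\varphi(k)/\gcd(k,h)$ ``precisely when $\mathbb{Q}(\sqrt{g_1})\subset\mathbb{Q}(\zeta_k)$, \ldots\ in which case the degree halves,'' i.e.\ precisely when $g_1\equiv 1\bmod 4$ and $|g_1|\mid k$. This is false: the containment $\mathbb{Q}(\sqrt{g_1})\subset\mathbb{Q}(\zeta_k)$ is necessary but not sufficient, and one must additionally require $2\mid k$. For $k$ odd the degree is always the generic one, because the subextension $[K_k:\mathbb{Q}(\zeta_k)]$ is the order of $g$ in $\mathbb{Q}(\zeta_k)^\times/(\mathbb{Q}(\zeta_k)^\times)^k$, and for odd $d$ the condition that $g$ be a $d$-th power in an abelian extension of $\mathbb{Q}$ reduces to $g$ being a $d$-th power in $\mathbb{Q}$ --- so $\sqrt{g_1}\in\mathbb{Q}(\zeta_k)$ cannot shrink it. (Concretely, take $g=5$, $k=5$: one has $\sqrt5\in\mathbb{Q}(\zeta_5)$, yet $5$ is not a fifth power there since $v_{\mathfrak p}(5)=4\not\equiv 0\bmod 5$, so $[K_5:\mathbb{Q}]=20$ with no halving.) This is exactly what the paper's Lemma~\ref{degree} encodes: $\epsilon(m,k)=2$ only if $2\mid k$ \emph{and} $\Delta\mid[m,k]$.

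This omission is not cosmetic. Since $g_1\equiv1\bmod4$ implies $g_1$ odd, your condition $|g_1|\mid k$ does not force $2\mid k$, and the exceptional squarefree $k$ should actually be those with $2|g_1|\mid k$. Writing $k=2|g_1|\ell$ with $(\ell,2|g_1|)=1$ introduces the factor $\mu(2)=-1$ into the M\"obius series, which is precisely what produces the minus sign in Hooley's correction factor $1-\mu(|g_1|)/\bigl(\prod_{p\mid h,\,p\mid g_1}(p-2)\prod_{p\nmid h,\,p\mid g_1}(p^2-p-1)\bigr)$. Carrying out the Euler-product computation with your stated condition $|g_1|\mid k$ instead yields $1+\mu(|g_1|)/(\cdots)$, i.e.\ the wrong sign. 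You need to restore the $2\mid k$ hypothesis before the ``secondary M\"obius inversion'' step.
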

Lenstra \cite{lenstra77} was the first to show, under assumption of GRH$(g)$, that the primes $p$ with primitive root $g$ and such that $p \equiv a \mod m$, have a natural density in the primes, denoted by $\delta(a,m,g)$. 

\begin{theorem}\cite{lenstra77} \label{thm:lenstra77}
Let $g \in \mathcal{G}$ and assume GRH$(g)$. Let $a$ and $m$ be coprime positive integers, and let $\sigma_a$ be the automorphism on $\mathbb{Q}(\zeta_m)$ determined by $\sigma_a(\zeta_m)= \zeta_m^a$. For a positive integer $n$, let $c_a(n)$ be 1 if $\sigma_a$ restricted to the field $\mathbb{Q}(\zeta_m) \cap \mathbb{Q}(\zeta_n,\sqrt[n]{g} )$ is the identity, and 0 else. Then for
$$\delta(a,m,g) \colonequals \sum \limits_{n=1}^{\infty}  \frac{\mu(n) c_a(n)}{[\mathbb{Q}(\zeta_m,\zeta_n, \sqrt[n]{g}): \mathbb{Q}]},$$
we have
$$\pi_g(x;m,a)= \delta(a,m,g) \frac{x}{\log x} +o \l( \frac{x}{\log x} \r).$$
\end{theorem}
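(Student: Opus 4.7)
The plan is to adapt Hooley's conditional proof of Artin's conjecture so that the congruence $p\equiv a\pmod m$ is carried through every step, with the cutoff parameters $\xi_i$ now chosen as functions of $x$, $m$, and $|g|$. For $\eta\geq 2$, I would define $P_a(x,\eta;m)$ to be the number of primes $p\leq x$ with $p\equiv a\pmod m$ such that $g$ is not a $q$-th power modulo $p$ for any prime $q\leq\eta$ with $q\mid p-1$, and pick $2\leq \xi_1<\xi_2<\xi_3\leq \sqrt{x}$. Then
\[
\pi_g(x;m,a)=P_a(x,\xi_1;m)-M_1(x;m,a)-M_2(x;m,a)-M_3(x;m,a),
\]
where $M_i$ counts primes $p\leq x$ with $p\equiv a\pmod m$ for which some prime $q$ in the $i$-th range (with cutoffs $\xi_1,\xi_2,\xi_3$) divides $p-1$ and makes $g$ a $q$-th power modulo $p$. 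A M\"obius inversion yields
\[
P_a(x,\xi_1;m)=\sum_{\substack{n\text{ squarefree}\\ q\mid n\Rightarrow q\leq \xi_1}}\mu(n)\,\pi_a(x;n,m),
\]
with $\pi_a(x;n,m)$ the number of primes $p\leq x$ with $p\equiv a\pmod m$ that split completely in $F_n:=\mathbb{Q}(\zeta_n,\sqrt[n]{g})$.

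To extract the main term, I would apply the effective Chebotarev density theorem of Lagarias--Odlyzko (valid under GRH$(L,g)$) to the compositum $\mathbb{Q}(\zeta_{[n,m]},\sqrt[n]{g})$, with Frobenius class determined by $\sigma_a$ on the $\mathbb{Q}(\zeta_m)$ part. This furnishes, for each $n$, a main term $c_a(n)\pi(x)/[\mathbb{Q}(\zeta_{[n,m]},\sqrt[n]{g}):\mathbb{Q}]$, and extending the truncated $\mu(n)$-sum to all $n\geq 1$ recovers exactly $\delta(a,m,g)\pi(x)$ by Theorem~\ref{thm:lenstra77}. The tail $n>\xi_1$ of the completed sum is bounded using the lower estimate $[\mathbb{Q}(\zeta_{[n,m]},\sqrt[n]{g}):\mathbb{Q}]\gg_g \phi(m)\phi(n)n$, contributing $\ll \phi(m)^{-1}\xi_1^{-1}\cdot x/\log x$. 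The sum of the Chebotarev errors over $n\leq \xi_1$ contributes roughly $\xi_1^{2}\sqrt{x}(\log\xi_1+\log m+\log|g|)$, forcing $\xi_1$ to be at most a small power of $\log x$.

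The three auxiliary sums $M_i$ would then be bounded in the spirit of Hooley, adapted to the progression $p\equiv a\pmod m$. For $M_1$, with $\xi_1<q\leq\xi_2$, effective Chebotarev on $\mathbb{Q}(\zeta_{[qm]},\sqrt[q]{g})$ summed over $q$ gives a contribution of order $\phi(m)^{-1}(x/\log x)\sum_{q>\xi_1}q^{-2}$ plus an error controllable with the choice $\xi_2\asymp \sqrt{x}/\log^{2}x$. For $M_2$, with $\xi_2<q\leq\xi_3$, Brun--Titchmarsh applied to the double progression $p\equiv 1\pmod q$, $p\equiv a\pmod m$ yields $M_2\ll \phi(m)^{-1}x\log\log x/\log^{2}x$. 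For $M_3$, with $q>\xi_3$, the congruence $g^{(p-1)/q}\equiv 1\pmod p$ forces $p-1=qd$ with $d$ small, and Hooley's elementary counting, constrained further by the progression, delivers an acceptable bound.

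The principal obstacle is the uniform balancing of all these contributions in $m$. The Lagarias--Odlyzko error for $\pi_a(x;n,m)$ grows with the conductor of $\mathbb{Q}(\zeta_{[n,m]},\sqrt[n]{g})$, hence polynomially in $m$, while the savings in each main and auxiliary contribution only gain a factor $\phi(m)^{-1}$. Ensuring that the $\phi(m)^{-1}$ saving prevails throughout, while simultaneously choosing $\xi_1$ of size roughly $\log x/\max\{\log(2m),\log\log x\}$ so that the tail error $\phi(m)^{-1}\xi_1^{-1}x/\log x$ matches the form stated in the theorem, is the technical heart of modifying Hooley's $\xi_i$ to depend on $m$, and this calibration will occupy the bulk of the work.
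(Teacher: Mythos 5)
This theorem is cited in the paper from Lenstra \cite{lenstra77} and is not reproved; the closest thing in the paper is the proof of the strengthened Theorem~\ref{thm:a.p.'s in a.p.'ssecondversion}, and your proposal reconstructs essentially that argument: Hooley's three-range decomposition carried through the progression $p\equiv a\bmod m$, with the M\"obius-inverted main term handled by effective Chebotarev on $F_{m,n}=\mathbb{Q}(\zeta_m,\zeta_n,\sqrt[n]{g})$, the first auxiliary range again by Chebotarev, the second by Brun--Titchmarsh, and the third by the elementary divisor count on $\prod_t(g^{2t}-1)$. This matches the paper's treatment of $N_{a,m,g}(x,\xi_1)$, $M_{a,m,g}(x,\xi_1,\xi_2)$, $M_{a,m,g}(x,\xi_2,\xi_3)$ and $M_{a,m,g}(x,\xi_3,x-1)$.

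There are, however, several slips. The decomposition is an inequality, not an identity, since the sets counted by the $M_i$ can overlap; the paper writes $N_{a,m,g}(x,\xi_1)-\pi_g(x;m,a)\ll\sum_i M_i$ as in \eqref{identitystart}. More substantively, the M\"obius sum for $P_a(x,\xi_1;m)$ runs over all squarefree $n$ whose prime factors are at most $\xi_1$, and there are roughly $2^{\pi(\xi_1)}$ such integers --- the paper bounds this crudely by $4^{\xi_1}$ --- not $O(\xi_1^2)$ as your error estimate supposes. For $\xi_1\asymp\log x$ this is still $x^{o(1)}$, so the qualitative $o(x/\log x)$ conclusion survives, but the explicit error you write down does not hold as stated. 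You also elide the compatibility question that produces the $c_a(n)$: the conditions $p\equiv a\bmod m$ and $p\in\mathfrak{P}_{n,g}$ may be jointly unrealisable, because $\mathbb{Q}(\zeta_m)$ and $\mathbb{Q}(\zeta_n,\sqrt[n]{g})$ are not linearly disjoint; the paper's Lemma~\ref{automorphism} is precisely the criterion for when a single Frobenius element can satisfy both restrictions, and without it the Chebotarev main term is not simply $\pi(x)/[F_{m,n}:\mathbb{Q}]$. Finally, this effective-Chebotarev route runs under GRH$(L,g)$, as does Theorem~\ref{thm:a.p.'s in a.p.'ssecondversion}, whereas the theorem you were asked to prove is stated under the Dedekind-zeta hypothesis GRH$(g)$ only, so your argument establishes the conclusion under a formally stronger hypothesis than the one given.
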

As mentioned earlier, more work was done by Lenstra, Moree and Stevenhagen to study the density $\delta(a,m,g)$. We will use the following result by Moree.
\begin{theorem} \cite{moree2008} \label{thm:moree2008}
Let $g \in \mathcal{G}$ and assume GRH$(g)$. Let $h$ be the largest integer $n$ for which $g$ is an $n$-th power. Let $a$ and $m$ be positive coprime integers. Denote the discriminant of the quadratic number field $\mathbb{Q}(\sqrt{g})$ by $\Delta$, and write $b \colonequals \frac{\Delta}{(m,\Delta)}$, and
$$\gamma \colonequals \begin{cases}
(-1)^{ \frac{b-1}{2} } (m,\Delta), & \text{if $b$ is odd,}\\
1, & \text{otherwise.}
\end{cases}$$
Writing
$$A(a,m,h) \colonequals  \begin{cases} \prod \limits_{ \substack{p|(a-1,m)  }} \l(1- \frac{1}{p} \r) \prod \limits_{ \substack{p \nmid m  \\ p|h }} \l(1- \frac{1}{p-1 } \r)  \prod \limits_{ \substack{p \nmid m \\ p \nmid  h }} \l(1- \frac{1}{p(p-1) } \r), & \text{if } (a-1,m,h)=1,\\
0, & \text{else,} 
\end{cases}$$
we have
\begin{align} \label{formuledelta}
\delta(a,m,g)= \frac{A(a,m,h)}{\phi(m)} \l( 1 + \l( \frac{\gamma}{a} \r) \frac{\mu(|2b|)}{ \prod \limits_{\substack{p | b \\ p|h}} (p-2)\prod \limits_{\substack{p | b \\ p \nmid h}} (p^2-p-1) } \r).
\end{align}
\end{theorem}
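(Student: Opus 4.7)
The starting point is Lenstra's formula
\[
\delta(a,m,g) = \sum_{n=1}^\infty \frac{\mu(n) c_a(n)}{[\mathbb{Q}(\zeta_m,\zeta_n,\sqrt[n]{g}):\mathbb{Q}]},
\]
and the plan is to rewrite the summand as an essentially multiplicative function of squarefree $n$ and then factor the sum as an Euler product over primes.

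The first step is to compute, for squarefree $n$, both the degree $[\mathbb{Q}(\zeta_m,\zeta_n,\sqrt[n]{g}):\mathbb{Q}]$ and the indicator $c_a(n)$. By Kummer theory, the degree equals $\phi([m,n]) \cdot n/(n,h)$ except for an extra factor of $\tfrac{1}{2}$ coming from the quadratic subfield $\mathbb{Q}(\sqrt{g_1})$ of $\mathbb{Q}(\sqrt[n]{g})$ whenever this subfield is contained in $\mathbb{Q}(\zeta_m\zeta_n)$. The intersection $\mathbb{Q}(\zeta_m) \cap \mathbb{Q}(\zeta_n,\sqrt[n]{g})$ is abelian, hence cyclotomic up to the quadratic piece $\mathbb{Q}(\sqrt{g_1})$; since $(a,m) = 1$, the automorphism $\sigma_a$ is automatically the identity on the cyclotomic part, so the only nontrivial condition in $c_a(n)$ comes from $\mathbb{Q}(\sqrt{g_1})$, on which $\sigma_a$ acts via the Kronecker symbol $\left(\frac{g_1^*}{a}\right)$ for the appropriate discriminant.

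Next I would split the sum as $\delta(a,m,g) = T_1 + T_2$, where $T_1$ collects the terms for which $\sqrt{g_1}$ does not enter the intersection and $T_2$ is the correction. Multiplicativity in $n$ reduces each to an Euler product. For $T_1$, the local factor at a prime $p$ depends on whether $p \mid m$, $p \mid h$, or neither, and one recovers
\[
T_1 = \frac{A(a,m,h)}{\phi(m)},
\]
with the clause $(a-1,m,h)=1$ arising as the obstruction that $g$ is forced to be a $p$-th power in $\mathbb{F}_p^*$ whenever $p \mid (a-1,m,h)$. For $T_2$, the entanglement occurs precisely at the primes dividing $b = \Delta/(m,\Delta)$; at such a prime $p$, the local factor is $-1/(p-2)$ or $-1/(p^2-p-1)$ according as $p \mid h$ or not, and the global sign $\left(\frac{\gamma}{a}\right)$ together with the twist $(-1)^{(b-1)/2}$ appearing in $\gamma$ (when $b$ is odd) records the reciprocity needed to express the action of $\sigma_a$ on $\mathbb{Q}(\sqrt{g_1})$ in terms of the residue of $a$ modulo $m$ rather than modulo $\Delta$. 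Dividing $T_2$ by $T_1$ gives the bracketed correction in the statement.

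The main obstacle is the entanglement analysis in the second step: one has to determine exactly when $\sqrt{g_1} \in \mathbb{Q}(\zeta_m,\zeta_n)$ in terms of $g_1 \bmod 4$, of $(m,\Delta)$, and of the primes dividing $n$, and then carry this through the Euler product. The appearance of $\mu(|2b|)$ rather than $\mu(|b|)$ is forced by the ramification of $\mathbb{Q}(\sqrt{g_1})/\mathbb{Q}$ at $2$ when $g_1 \not\equiv 1 \pmod 4$, and the case split on the parity of $b$ in the definition of $\gamma$ isolates the genuinely new quadratic contribution from the parts already absorbed into the cyclotomic intersection. In practice one would carry out this bookkeeping following the framework developed by Lenstra, Moree and Stevenhagen in \cite{lenstrastevenhagenmoree} and \cite{moree2008}.
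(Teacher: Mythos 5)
First, a point of reference: the paper does not prove this statement. It is quoted from Moree \cite{moree2008}; the only related ingredient the paper itself establishes is the series identity $\frac{1}{\phi(m)}\sum_k f(k)=\delta(a,m,g)$ of Lemma \ref{delta}, via the degree formula of Lemma \ref{degree} and the characterization of $c_a(k)$ in Lemma \ref{automorphism}. That said, your overall strategy --- factor Lenstra's series into a multiplicative main part $T_1$ and a quadratic entanglement correction $T_2$, and evaluate each as an Euler product --- is indeed the route taken in \cite{moree2008}, so it is worth assessing on its merits.

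There is, however, a concrete error in your reduction of $c_a(n)$. You assert that, because $(a,m)=1$, the automorphism $\sigma_a$ is automatically the identity on the cyclotomic part of $\mathbb{Q}(\zeta_m)\cap\mathbb{Q}(\zeta_n,\sqrt[n]{g})$, so that the only nontrivial condition comes from $\mathbb{Q}(\sqrt{g_1})$. This is false: that intersection contains $\mathbb{Q}(\zeta_{(m,n)})$, on which $\sigma_a$ acts by $\zeta\mapsto\zeta^a$, and this is the identity if and only if $a\equiv 1 \bmod (m,n)$ --- precisely the first condition in \eqref{conditions} (Lemma \ref{automorphism}). The hypothesis $(a,m)=1$ only makes $\sigma_a$ a well-defined automorphism; it does not make it trivial. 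This congruence condition is exactly the source of the factor $\prod_{p\mid(a-1,m)}\l(1-\frac{1}{p}\r)$ and of the vanishing clause $(a-1,m,h)\neq 1$ in $A(a,m,h)$. If it were really vacuous, then writing $\phi(m)T_1=\prod_p(1-\lambda_p)$ with $\lambda_p=\frac{\phi(m)}{p_1\,\phi([m,p])}$, the local factor at every prime $p\mid m$ with $p\nmid h$ would be $1-\frac{1}{p}$ (not only at $p\mid(a-1,m)$), and at every prime $p\mid(m,h)$ it would be $1-1=0$, forcing the density to vanish whenever $(m,h)>1$. So the claimed identity $T_1=A(a,m,h)/\phi(m)$ does not follow from your premises. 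Beyond this, the entanglement analysis in $T_2$ (determining when $\sqrt{g_1}\in\mathbb{Q}(\zeta_{[m,n]})$, and the provenance of $\mu(|2b|)$ and $\gamma$) is described but not carried out, and is explicitly deferred to \cite{lenstrastevenhagenmoree} and \cite{moree2008}; as written, the proposal is a plausible outline of Moree's argument containing one wrong reduction, rather than a proof.
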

If the density $\delta(a,m,g)$ is positive, then it satisfies $\delta(a,m,g) \gg \frac{1}{\phi(m) \log \log m}$, as we now explain. We have
$$\prod \limits_{ \substack{p|(a-1,m)  }} \l(1- \frac{1}{p} \r) \geq \prod \limits_{ \substack{p|m  }} \l(1- \frac{1}{p} \r)= \frac{\phi(m)}{m} \gg \frac{1}{\log \log m},$$
while the other factors from $A(a,m,h)$ are bounded from below by a finite or absolutely convergent, positive product. Furthermore, the factor 
$$1 + \l( \frac{\gamma}{a} \r) \frac{\mu(|2b|)}{ \prod \limits_{\substack{p | b \\ p|h}} (p-2)\prod \limits_{\substack{p | b \\ p \nmid h}} (p^2-p-1) } $$
is bounded from below by $\frac{2}{3}$, since we assumed that $\delta(a,m,g)$ is positive.

\subsection{A Diophantine application}
 A classical example of an application of the Siegel--Walfisz theorem is to count the number of primes $p \leq x$ for which $p-1$ is squarefree, see for example Theorem 11.22 in \cite{MontgomeryVaughan}.  
Inspired by this, we will apply Theorem \ref{thm:a.p.'s in a.p.'ssecondversion} to count primes $p$ with primitive root $g$, for which $ap+b$ is squarefree, where $a >0$ and $b$ are integers with $\gcd(a,b)=1$. We denote by $\pi_{g,a,b}(x)$ the number of such primes bounded by $x$.

\begin{notation} \label{u_0(d)}
For $d$ a non-zero integer with $\gcd(a,d)=1$, denote the solution to the equation
$$ax \equiv -b \mod d^2$$ 
by $u_0(d) \mod d^2$. 
\end{notation}
\begin{definition} \label{deltanatural}
For $\gcd(a,m)=1$ and $g \in \mathcal{G}$, let
$$\delta^{\natural}(a,m,g) \colonequals \frac{A(a,m,h)}{\phi(m)C(h)}.$$
\end{definition}
In the case that $g_1 \equiv 2 \mod 4$, and $8 \nmid m$ or $g_1 \equiv 3 \mod 4$ and $4 \nmid m$, then one can derive from Theorem \ref{thm:moree2008} that $\delta^{\natural}(a,m,g)= \frac{\delta(a,m,g)}{C(h)}$, as is described in Theorem 1.3 from \cite{moree2008}. This means that $\delta^{\natural}(a,m,g)$ is the density of the primes $p$ with primitive root $g$ satisfying \\ $p \equiv a \mod m$, in the set of primes with primitive root $g$. In the other cases, the factor 
$$1 + \l( \frac{\gamma}{a} \r) \frac{\mu(|2b|)}{ \prod \limits_{\substack{p | b \\ p|h}} (p-2)\prod \limits_{\substack{p | b \\ p \nmid h}} (p^2-p-1) }$$
appearing in \eqref{formuledelta} is non-zero, and $\delta^{\natural}(a,m,g)$ does not take into account this correction factor. However, in our application $\delta^{\natural}(a,m,g)$ turns out to be a convenient quantity to consider. See \cite{correction} for an interpretation and the history of the correction factor in Artin's primitive root conjecture.
\begin{theorem} \label{ap+b} 
Let $g \in \mathcal{G}$, and let $a$ and $b$ be non-zero coprime integers, with $a>0$. Let $\Delta$ be the discriminant of $\mathbb{Q}(\sqrt{g})$. Under assumption of GRH$(L,g)$, we then have
\begin{align} \label{ap+bformula}
\begin{split}
\pi_{g,a,b}(x) &= \l( \sum \limits_{\substack{ c \mod |\Delta| \\ \forall p: p| \Delta \implies p^2 \nmid ac+b}} \delta(c, |\Delta|,g) \r)  \l( \prod \limits_{ \substack{ p \nmid \Delta \\ p \nmid ab \\ p \nmid (a+b,h)  }} \l(1- \delta^{\natural}(u_0(p),p^2,g) \r)  \r) \frac{x}{\log x} \\&+ O \l( a (\log |g|) \frac{x \log \log x}{(\log x)^2} \r).
\end{split}
\end{align}
\end{theorem}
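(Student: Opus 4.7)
The plan is to detect the squarefree condition on $ap+b$ via the M\"obius identity $\mathbbm{1}[n \text{ squarefree}] = \sum_{d^2 \mid n} \mu(d)$, apply Theorem \ref{thm:a.p.'s in a.p.'ssecondversion} to each inner count $\pi_g(x; d^2, u_0(d))$ for $d$ below a suitable cutoff, handle the large-$d$ tail by Brun--Titchmarsh and a direct pair count, and finally identify the resulting constant with the right-hand side of \eqref{ap+bformula} using Theorem \ref{thm:moree2008}. Since $\gcd(a,b)=1$, the divisibility $d^2 \mid ap+b$ with $p \nmid d$ forces $\gcd(d, ab) = 1$ and is then equivalent to $p \equiv u_0(d) \pmod{d^2}$ with $\gcd(u_0(d), d^2) = 1$; up to an $O(\omega(ab))$ correction for the exceptional primes,
$$\pi_{g,a,b}(x) = \sum_{\substack{d \leq \sqrt{(a+1)x} \\ \mu^2(d)=1,\ \gcd(d,ab)=1}} \mu(d)\, \pi_g(x; d^2, u_0(d)).$$

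\textbf{Splitting and error control.} Setting $\xi = \log x$, I split the sum at $d = \xi$. For $d \leq \xi$, $\log(2d^2) = O(\log\log x)$, and Theorem \ref{thm:a.p.'s in a.p.'ssecondversion} gives a per-$d$ error of $\frac{1}{\phi(d^2)} \cdot \frac{x (\log|g|) \log\log x}{(\log x)^2}$; the convergence of $\sum_{d \geq 1} 1/\phi(d^2) = \prod_p (1 + 1/(p(p-1)))$ yields the aggregate error $O(x(\log|g|)\log\log x/(\log x)^2)$. For $\xi < d \leq x^{1/3}$, Brun--Titchmarsh gives $\pi(x; d^2, u_0(d)) \ll x/(\phi(d^2)\log(x/d^2))$, summing to $O(x \log\log x/(\log x)^2)$. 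For $x^{1/3} < d \leq \sqrt{(a+1)x}$, I write $ap+b = d^2 m$ with $1 \leq m \leq (a+1)x/d^2$ and count pairs $(d,m)$ directly, producing $O(ax^{2/3})$. The tail of the main sum is bounded using $\delta(u_0(d), d^2, g) \ll 1/\phi(d^2)$, giving $O(x\log\log x/(\xi \log x))$. All error terms fit into $O(a(\log|g|) x \log\log x/(\log x)^2)$.

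\textbf{Identifying the leading constant.} The main term is $\frac{x}{\log x} S$ with $S = \sum_{d} \mu(d) \delta(u_0(d), d^2, g)$, summed over squarefree $d$ coprime to $ab$. To match $S$ with the product in \eqref{ap+bformula}, I factor $d = d_1 d_2$ with $d_1 \mid \mathrm{rad}(\Delta)$ and $\gcd(d_2, \Delta) = 1$. By Theorem \ref{thm:moree2008}, $A(u_0(d), d^2, h)/\phi(d^2)$ is multiplicative in the prime factors of $d$ and equals $C(h) \prod_{q \mid d} \delta^\natural(u_0(q), q^2, g)$, with the factor vanishing when some $q \mid d$ has $q \mid (a+b, h)$. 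The correction factor in $\delta$ depends on $\gcd(d^2, \Delta) = \gcd(d_1^2, \Delta)$ and on the Kronecker symbol $(\gamma/u_0(d))$; since $a u_0(d) \equiv -b \pmod{d^2}$ determines $u_0(d)$ modulo any divisor of $d_1^2$, and the conductor of $(\gamma/\cdot)$ divides $4\gcd(d_1^2, \Delta)$, this symbol depends only on $d_1$. Thus
$$\delta(u_0(d_1 d_2), (d_1 d_2)^2, g) = \widetilde\delta(d_1) \prod_{q \mid d_2} \delta^\natural(u_0(q), q^2, g),$$
and $S$ factors as $\bigl(\sum_{d_1 \mid \mathrm{rad}(\Delta)} \mu(d_1) \widetilde\delta(d_1)\bigr) \prod_{q \nmid \Delta ab,\ q \nmid (a+b, h)}(1 - \delta^\natural(u_0(q), q^2, g))$. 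The finite $d_1$-sum is converted into the claimed residue-class form by inclusion-exclusion on the primes $p \mid \Delta$: expanding $\prod_{p \mid \Delta}(1 - \mathbbm{1}[p^2 \mid ac+b]) = \sum_{d_1 \mid \mathrm{rad}(\Delta)} \mu(d_1) \mathbbm{1}[d_1^2 \mid ac+b]$ and summing over $c \bmod |\Delta|$ weighted by $\delta(c, |\Delta|, g)$ matches $\sum_{d_1} \mu(d_1) \widetilde\delta(d_1)$ with the sum $\sum_{c \text{ good}} \delta(c, |\Delta|, g)$.

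\textbf{Main obstacle.} The identification step is the most delicate. Verifying that the correction factor really depends only on $d_1$ requires handling each case of Theorem \ref{thm:moree2008} (parity of $b$, parity of $\Delta$, and $p = 2$ versus odd $p \mid \Delta$), and matching $\sum_{d_1} \mu(d_1) \widetilde\delta(d_1)$ with $\sum_{c \text{ good}} \delta(c, |\Delta|, g)$ requires careful accounting of how the local correction at primes $p \mid \Delta$ in $\delta(c, |\Delta|, g)$ interacts with the global factor. A secondary technical point is that $ap+b \leq (a+1)x$ forces the M\"obius sum to range up to $\sqrt{(a+1)x}$, which is where the factor of $a$ in the error of \eqref{ap+bformula} originates.
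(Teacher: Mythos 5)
Your proposal is correct and follows essentially the same route as the paper: M\"obius detection of the squarefree condition, application of Theorem \ref{thm:a.p.'s in a.p.'ssecondversion} for small $d$ using $\sum_d 1/\phi(d^2)<\infty$, and identification of the constant by splitting $d$ into its $\Delta$-part and co-$\Delta$-part, observing via periodicity of the Kronecker symbol that the correction factor depends only on the $\Delta$-part, and finishing with inclusion-exclusion over residues mod $|\Delta|$. The only differences are cosmetic choices in the tail treatment (the paper cuts at $(\log x)^2$ and bounds $\pi_g(x;d^2,u_0(d))\le x/d^2+1$ trivially, rather than invoking Brun--Titchmarsh and a pair count).
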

\begin{remark}
Only when $\gcd(a,b)$ is not squarefree, there is an obvious reason why $ap+b$ cannot be squarefree. However, we assume that $a$ and $b$ are coprime, in order to avoid some complications in the proof.
\end{remark}
We now provide an informal probabilistic interpretation of Theorem \ref{ap+b}. The factors $1- \delta^{\natural}(u_0(p),p^2,g)$ correspond to the probabilities that for $q$ a prime with $g$ as a primitive root, $aq+b$ is not divisible by $p^2$. However, the product of these factors does not take into account the correction factor in Artin's primitive root conjecture, nor the primes dividing $\Delta$, and we need a factor for the probability that a prime has primitive root $g$. This extra information is contained in the sum of terms $\delta \l(c, |\Delta|,g \r)$. Here $c$ ranges over residue classes modulo $|\Delta|$ such that $ac+b$ is squarefree modulo $|\Delta|$, by which we mean that for no prime factor $p$ of $|\Delta|$, $p^2$ divides $ac+b$.  \\\\
In the first part of the proof of Theorem \ref{ap+bformula}, we imitate Walfisz's method \cite{walfisz} to determine the number of ways to write an integer as a sum of a squarefree number and a prime. However, we use Theorem \ref{thm:a.p.'s in a.p.'ssecondversion} instead of the Siegel--Walfisz theorem. The rest of the proof of Theorem \ref{ap+b} is more specific for this problem, involving calculations related to the density $\delta(a,m,g)$.
\begin{corollary} \label{cor:ap+b}
In the setting of Theorem \ref{ap+b}, assume there is a $c \mod |\Delta|$ such that for every prime $q$ dividing $\Delta$ we have $q^2 \nmid ac+b$, and such that there are infinitely many primes $p$ that satisfy $p \equiv c \mod |\Delta|$ and $\langle g \rangle = \mathbb{F}_p^*$. Then there are infinitely many primes $p$ with $\langle g \rangle = \mathbb{F}_p^*$ such that $ap+b$ is squarefree.
\end{corollary}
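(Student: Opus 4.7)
The plan is to apply Theorem \ref{ap+b} and verify that the constant $M$ multiplying $x/\log x$ in the main term is strictly positive; since the error term is $O(a(\log|g|)\,x\log\log x/(\log x)^2) = o(x/\log x)$, once $M>0$ is established we obtain $\pi_{g,a,b}(x)\to\infty$, which is exactly the conclusion. The constant $M$ is a product of two factors: a finite sum $\Sigma \colonequals \sum_c \delta(c,|\Delta|,g)$ over admissible residues $c\bmod|\Delta|$, and an infinite product $\Pi \colonequals \prod_p(1-\delta^\natural(u_0(p),p^2,g))$ over primes $p\nmid \Delta$, $p\nmid ab$, $p\nmid (a+b,h)$. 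The proof thus reduces to showing $\Sigma > 0$ and $\Pi > 0$.

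For $\Sigma > 0$, I would use the hypothesis to extract a strictly positive summand. The hypothesised $c$ satisfies $p^2\nmid ac+b$ for all $p\mid\Delta$, so it appears in the indexing set. I claim $\delta(c,|\Delta|,g)>0$: inspecting Theorem \ref{thm:moree2008}, $\delta=0$ occurs only when $(c-1,|\Delta|,h)\neq 1$ or when the quadratic correction factor vanishes. In the first case, some prime $r\mid h$ divides both $c-1$ and $|\Delta|$, so $r\mid p-1$ whenever $p\equiv c\pmod{|\Delta|}$; since $g$ is an $r$-th power, $g^{(p-1)/r}\equiv 1\pmod p$, forbidding $g$ from being a primitive root modulo any such $p$. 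The second case is a similar structural obstruction. Either way the set of primes in question is finite, contradicting the hypothesis of infinitude. Hence $\delta(c,|\Delta|,g) > 0$ and $\Sigma > 0$.

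For $\Pi > 0$, I would verify each factor is strictly positive and that the product converges. A direct evaluation via Theorem \ref{thm:moree2008} yields $\delta^\natural(u_0(p),p^2,g)$ explicitly in four cases, depending on whether $p\mid h$ and whether $p\mid u_0(p)-1$ (the latter being equivalent to $p\mid a+b$, since $p\nmid a$). In every case the value is of order $1/p^2$ and at most $1/(p^2-p-1)$ when $p\nmid a+b$. The only apparent difficulty is at $p=2$, where $p^2-p-1=1$; however, $p=2$ appears in the product only when $a$ and $b$ are both odd, in which case $2\mid a+b$, and a separate computation of $A(u_0(2),4,h)$ picks up an extra factor $1/2$ from the divisor~$2$ of~$u_0(2)-1$, yielding $\delta^\natural(u_0(2),4,g) = 1/2 < 1$. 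With each factor in $(0,1)$ and $\sum_p\delta^\natural(u_0(p),p^2,g) < \infty$, the product converges to a positive limit.

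The main obstacle is the step ``infinitely many primes $\Rightarrow \delta(c,|\Delta|,g)>0$'': Theorem \ref{thm:lenstra77} alone only delivers $\pi_g(x;|\Delta|,c) = o(x/\log x)$ in the vanishing case, which does not preclude infinitude. The resolution requires unpacking the algebraic content of Theorem \ref{thm:moree2008} to match each vanishing mode of $\delta$ with a genuine obstruction forcing the relevant set of primes to be finite, as sketched above. Once that is settled the positivity of $M$ follows and the conclusion is immediate from Theorem \ref{ap+b}.
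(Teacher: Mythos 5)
Your overall architecture is the same as the paper's: write the main term of Theorem \ref{ap+b} as $\Sigma\cdot\Pi\cdot x/\log x$, check $\Pi>0$ and $\Sigma>0$, and let the main term swamp the error. Your treatment of $\Pi$ is fine and matches what the paper does --- at the end of the proof of Theorem \ref{ap+b} the product is identified with an explicit Euler product whose factors all lie in $(0,1)$ (note that $p=2$ only ever occurs in the sub-product indexed by $p\mid a+b$, exactly as you observe), so positivity and convergence are immediate.

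The gap is in your argument for $\Sigma>0$. You correctly identify that the crux is the implication ``infinitely many primes $p\equiv c\bmod|\Delta|$ with primitive root $g$ $\Rightarrow$ $\delta(c,|\Delta|,g)>0$,'' and you correctly note that Theorem \ref{thm:lenstra77} does not give this. But you then prove it only in the vanishing mode $(c-1,|\Delta|,h)\neq 1$ (that part of your argument is correct), and dismiss the second vanishing mode --- where $A(c,|\Delta|,h)>0$ but the factor $1+\l(\frac{\gamma}{c}\r)\frac{\mu(|2b|)}{\prod(\cdots)}$ in \eqref{formuledelta} equals zero --- with the sentence ``the second case is a similar structural obstruction.'' That is not a similar obstruction and is not proved: it is a quadratic-reciprocity entanglement, not a divisibility condition. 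The paper closes this hole by citing Lenstra \cite{lenstra77}, where the equivalence ``$\delta(c,m,g)>0$ iff there are infinitely many primes $p\equiv c\bmod m$ with primitive root $g$'' is established. If you want a self-contained argument for the modulus $m=|\Delta|$, it can be completed: here $(m,\Delta)=|\Delta|$, so $b=\pm1$, the products over $p\mid b$ are empty, $\mu(|2b|)=-1$ and $\gamma=\Delta$, so the correction factor is $1-\l(\frac{\Delta}{c}\r)$; it vanishes exactly when $\l(\frac{\Delta}{c}\r)=1$, in which case every odd prime $p\equiv c\bmod|\Delta|$ splits in $\mathbb{Q}(\sqrt{g})$, i.e.\ $g$ is a quadratic residue modulo $p$ and hence not a primitive root --- contradicting the hypothesis of infinitude. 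Without this (or the citation), your proof of $\Sigma>0$ is incomplete.
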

Informally speaking this means that if there is an $x \mod |\Delta|$ such that there are infinitely many primes $p$ with primitive root $g$ and $p \equiv x \mod |\Delta|$, and such that $ax+b$ is squarefree modulo $|\Delta|$, then $ap+b$ is squarefree for infinitely many primes $p$ having primitive root $g$.
\begin{proof}
At the end of the proof of Theorem \ref{ap+b} we will see that the product in \eqref{ap+bformula} with factors $1-\delta^{\natural}(u_0(p),p^2,g)$ is positive. Now the corollary follows from the fact that the density $\delta(c,m,g)$ is positive if and only if there are infinitely many primes $p \equiv c \mod m$ with primitive root $g$, which is proved in \cite{lenstra77}
\end{proof}

\section{Proof of Theorem \ref{thm:a.p.'s in a.p.'ssecondversion}}
\subsection{Preparations for the proof of Theorem \ref{thm:a.p.'s in a.p.'ssecondversion}}
The following lemma provides us with a useful tool to detect whether a prime $p$ has $g$ as a primitive root. Its proof is standard.
\begin{lemma} \label{primroot}
Let $p$ be an odd prime. Then the following assertions are equivalent.\\
\textbf{i)} $g$ is a primitive root modulo $p$;\\
\textbf{ii)} for every prime divisor $q$ of $p-1$ we have $g^{\frac{p-1}{q}} \not \equiv 1 \mod p$;\\
\textbf{iii)} for every prime divisor $q$ of $p-1$ there is no $x \in \mathbb{Z}$ such that $x^q \equiv g \mod p$.
\end{lemma}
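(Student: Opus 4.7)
The proof plan is to work inside the cyclic group $\mathbb{F}_p^*$, which has order $p-1$. For the equivalence of (i) and (ii), I would use the standard fact that the order $\mathrm{ord}_p(g)$ of $g$ divides $p-1$, and $g$ is a primitive root precisely when $\mathrm{ord}_p(g) = p-1$. If $\mathrm{ord}_p(g)$ is a proper divisor of $p-1$, then there is a prime $q \mid p-1$ such that $\mathrm{ord}_p(g)$ divides $(p-1)/q$, and hence $g^{(p-1)/q} \equiv 1 \pmod{p}$; this gives (i) $\Rightarrow$ (ii) by contrapositive. Conversely, if $g^{(p-1)/q} \equiv 1 \pmod{p}$ for some prime $q \mid p-1$, then $\mathrm{ord}_p(g) \mid (p-1)/q < p-1$, so $g$ is not a primitive root, yielding (ii) $\Rightarrow$ (i).

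For the equivalence of (ii) and (iii), the key observation is that for any prime $q \mid p-1$, the $q$-th power map $x \mapsto x^q$ on the cyclic group $\mathbb{F}_p^*$ is a group homomorphism whose image is exactly the unique subgroup of index $q$, namely the subgroup $\{y \in \mathbb{F}_p^* : y^{(p-1)/q} \equiv 1 \pmod{p}\}$. Therefore the existence of $x \in \mathbb{Z}$ with $x^q \equiv g \pmod{p}$ is equivalent to $g^{(p-1)/q} \equiv 1 \pmod{p}$. Quantifying this equivalence over prime divisors $q$ of $p-1$ gives (ii) $\Leftrightarrow$ (iii).

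Since none of the steps requires anything beyond elementary cyclic group theory plus Fermat's little theorem, there is no serious obstacle. The only small point to be careful about is verifying that the subgroup of $q$-th powers really equals the kernel of raising to the $(p-1)/q$ power, which is immediate from the cyclic structure: writing $\mathbb{F}_p^* = \langle \gamma \rangle$, both subgroups consist of powers $\gamma^{jq}$ for $j \in \mathbb{Z}$. Once these three implications are assembled, the lemma follows.
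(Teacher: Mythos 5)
Your proof is correct. The paper does not spell out a proof, labelling the lemma as standard; your argument---comparing $\mathrm{ord}_p(g)$ with $p-1$ for (i)$\Leftrightarrow$(ii), and identifying the image of the $q$-th power map on the cyclic group $\mathbb{F}_p^*$ with the kernel of raising to the $(p-1)/q$ power for (ii)$\Leftrightarrow$(iii)---is precisely that standard argument.
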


\begin{definition}
We let $\mathfrak{P}_1$ be the set of all primes, and for any prime $q$ and $g \in \mathcal{G}$ we define
$$\mathfrak{P}_{q,g} \colonequals \l\{ p: q|p-1 \text{ and } g^{\frac{p-1}{q}} \equiv 1 \mod p \r\} . $$
For $k$ a positive squarefree integer, we let
$$\mathfrak{P}_{k,g} \colonequals\bigcap \limits_{q|k}\mathfrak{P}_{q,g}. $$
\end{definition}
We see that $\langle g \rangle = \mathbb{F}_p^*$ if and only if $p \notin \mathfrak{P}_{q,g}$ for every prime $q$.
If $p \in \mathfrak{P}_{q,g}$ and $p \leq x$, then $q|p-1$ and thus $q \leq x-1$, therefore
$$\pi_g\l(x;m,a\r) = \# \{ p \leq x: p \equiv a \mod m, \forall q \leq x-1: p \notin \mathfrak{P}_{q,g} \}.$$
The proof of the next lemma is also standard.
\begin{lemma} \label{intersection}
For $k \in \mathbb{N}$ squarefree, we have 
$\mathfrak{P}_{k,g} = \l\{ p: k|p-1 \text{ and } g^{\frac{p-1}{k}} \equiv 1 \mod p \r\} . $
\end{lemma}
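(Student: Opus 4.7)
The plan is to prove the two inclusions separately, with the squarefree hypothesis on $k$ playing a role in the harder direction.

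For the inclusion $\supseteq$, assume $k\mid p-1$ and $g^{(p-1)/k}\equiv 1\pmod p$. For every prime $q\mid k$, clearly $q\mid k\mid p-1$, and raising the congruence to the power $k/q$ yields $g^{(p-1)/q}\equiv 1\pmod p$. Hence $p\in\mathfrak{P}_{q,g}$ for each prime $q\mid k$, so $p\in\mathfrak{P}_{k,g}$. This direction uses nothing about $k$ beyond the divisibilities and requires no squarefreeness.

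For the inclusion $\subseteq$, assume $p\in\mathfrak{P}_{k,g}$, i.e.\ for every prime $q\mid k$ one has $q\mid p-1$ and $g^{(p-1)/q}\equiv 1\pmod p$. The first claim is that $k\mid p-1$; since $k$ is squarefree it factors as $k=\prod_{q\mid k} q$ over distinct primes, each of which divides $p-1$, so the product does as well. The second claim is that $g^{(p-1)/k}\equiv 1\pmod p$. Introduce $d=\mathrm{ord}_p(g)$; then $g^{(p-1)/q}\equiv 1\pmod p$ is equivalent to $d\mid (p-1)/q$, equivalently $q\mid (p-1)/d$. Running this over all primes $q\mid k$ shows that every prime divisor of $k$ divides the integer $(p-1)/d$, and invoking squarefreeness of $k$ a second time gives $k\mid (p-1)/d$, i.e.\ $d\mid (p-1)/k$, which is the required congruence.

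There is essentially no obstacle here: the only subtle point is remembering that squarefreeness of $k$ is used twice, once to combine the divisibilities $q\mid p-1$ into $k\mid p-1$, and once to combine the divisibilities $q\mid (p-1)/d$ into $k\mid (p-1)/d$. Without squarefreeness the statement would fail, since from $g^{(p-1)/q}\equiv 1\pmod p$ alone one cannot conclude $g^{(p-1)/q^2}\equiv 1\pmod p$.
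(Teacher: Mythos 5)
Your proof is correct. The paper omits the proof of this lemma, remarking only that it is standard, and the argument you give is exactly the expected one: squarefreeness combines the pairwise divisibilities $q\mid p-1$ into $k\mid p-1$, and passing through the order $d=\mathrm{ord}_p(g)$ converts the conditions $g^{(p-1)/q}\equiv 1\pmod p$ into $q\mid(p-1)/d$, which squarefreeness again combines into $k\mid(p-1)/d$.
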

We further introduce some quantities counting primes in arithmetic progressions, having certain properties related to $g$ being a primitive root.
\begin{definition}
Let $g \in \mathcal{G}$, $m \in \mathbb{N}$ and let $1 \leq a \leq m$ be coprime to $m$.\\
Let, for $x, \eta \in \mathbb{R}_{>0}$,
$$N_{a,m,g}(x, \eta) \colonequals \# \l\{p \leq x: p \equiv a \mod m \text{ and } \forall ~ q \leq \eta: p \notin \mathfrak{P}_{q,g} \r\}.$$
For $x, \eta_1, \eta_2 \in \mathbb{R}_{>0}$ with $\eta_2 > \eta_1$, let
$$M_{a,m,g}(x, \eta_1,\eta_2) \colonequals  \# \{p \leq x: p \equiv a \mod m \text{ and } \exists ~ \eta_1 <q \leq \eta_2: p \in \mathfrak{P}_{q,g} \}.$$
For $k \in \mathbb{N}$ squarefree and $x>0$, let
$$P_{k,g}(x;m,a) \colonequals \# \{p \leq x: p \equiv a \mod m \text{ and } p \in \mathfrak{P}_{k,g} \}.$$
\end{definition}
Throughout the proof, we assume that $m \ll x^{\epsilon}$ for every $\epsilon>0$. We can assume this without loss of generality, since for $m > x^{\epsilon}$, the main term in Theorem \ref{thm:a.p.'s in a.p.'ssecondversion} is smaller than the error term, and the theorem is trivially true.
\begin{notation}
Throughout the proof, the notations $\ll$ and $\gg$ indicate an estimate with absolute implied constant, unless mentioned otherwise.
\end{notation}

\subsection*{Proof of Theorem \ref{thm:a.p.'s in a.p.'ssecondversion}}

Let $0 < \xi_1<\xi_2<\xi_3<x-1$ be functions that satisfy \\ $\lim \limits_{x \to \infty} \xi_i(x)= \infty ~ (i=1,2,3)$. We will choose the functions $\xi_i$ at the end of the proof. By an elementary counting argument we see that
\begin{align}
N_{a,m,g}(x,\xi_1) -\pi_g(x;m,a)\ll \sum \limits_{i=1}^2 M_{a,m,g}(x, \xi_i,\xi_{i+1}) + M_{a,m,g}\l(x, \xi_3,x-1\r).  \label{identitystart}
\end{align} 
The proof consists of finding upper bounds for the terms on the right side, and finding an asymptotic with explicit error term for $N_{a,m,g}(x,\xi_1)$.
\subsection{Bounding $M_{a,m,g}(x, \xi_3,x-1)$}

If $p$ is counted by $M_{a,m,g}(x, \xi_3,x-1)$, then there is a $\xi_3 < q \leq x-1$ such that $q|(p-1)$ and $p|\l(g^{2\frac{p-1}{q}}-1\r)$. Because 
$\frac{p-1}{q} \leq \frac{x}{\xi_3}$, we see that $p$ divides the product $\prod \limits_{1 \leq t \leq \frac{x}{\xi_3}} \l( g^{2t}-1 \r)$. Therefore,
\begin{align*}
 M_{a,m,g}(x, \xi_3,x-1) \leq  \# \Bigg\{ p : p \equiv a \mod m \text{ and } p| \prod \limits_{1 \leq t \leq \frac{x}{\xi_3}} \l( g^{2t}-1 \r) \Bigg\}.
\end{align*}
Letting $p_1<p_2<...<p_l$ be counted on the right side, we get

\begin{align*}
\prod \limits_{1 \leq t \leq \frac{x}{\xi_3}} \l( g^{2t}-1 \r) \geq p_1p_2 \cdots p_l
\geq a(a+m)(a+2m) \cdots (a+(l-1)m)
\geq m^{l-1}.
\end{align*}
It follows that $(l-1) \log m$ is at most
\begin{align}
\begin{split} \label{log}
 \sum \limits_{1 \leq t \leq \frac{x}{\xi_3}} \log \l( g^{2t}-1  \r)
\ll\log (|g|) \frac{x^2}{{\xi_3}^2}.
\end{split}
\end{align}
Combining the estimates so far yields the next result.
\begin{lemma} \label{lem:sum1}
The following holds with an absolute implied constant,
\begin{align*}
M_{a,m,g}(x, \xi_3,x-1) \ll \log (|g|) \frac{x^2}{{\xi_3}^2 \log (2m)} +1.
\end{align*}
\end{lemma}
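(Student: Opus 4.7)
The bulk of the work is already done in the display chain culminating in \eqref{log}: if $l$ denotes the number of primes $p \equiv a \pmod{m}$ with $p \mid \prod_{1 \leq t \leq x/\xi_3}(g^{2t}-1)$, then the primes-in-arithmetic-progression lower bound $p_1 p_2 \cdots p_l \geq m^{l-1}$ combined with $\log(g^{2t}-1) \ll t \log|g|$ and summation over $t$ yields
$$(l-1) \log m \ll \log(|g|) \frac{x^2}{\xi_3^2}.$$
All that remains is to convert $\log m$ into $\log(2m)$ and absorb the off-by-one into the additive constant.

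The plan is to split into two cases. For $m \geq 2$, we have $\log(2m) = \log 2 + \log m \leq 2 \log m$, so the above inequality gives
$$l-1 \ll \log(|g|) \frac{x^2}{\xi_3^2 \log(2m)},$$
and adding $1$ on both sides yields the lemma in this range. For $m=1$, the bound $\prod p_i \geq m^{l-1}$ is trivial, so I would instead use the cruder estimate $\prod p_i \geq 2^l$ (since every $p_i$ is a prime $\geq 2$). Combined with the same upper bound on $\sum_{t \leq x/\xi_3} \log(g^{2t}-1)$, this gives $l \log 2 \ll \log(|g|) x^2/\xi_3^2$; since $\log(2m) = \log 2$ in this case, the stated bound holds.

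There is no genuine obstacle here, only the bookkeeping issue that $\log m$ vanishes at $m=1$. The introduction of $\log(2m)$ together with the harmless $+1$ in the statement of the lemma is precisely what is needed so that a single inequality covers both regimes uniformly and can be fed into the subsequent estimates that bound $M_{a,m,g}(x,\xi_i,\xi_{i+1})$ and $N_{a,m,g}(x,\xi_1)$.
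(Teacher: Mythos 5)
Your proposal is correct and follows essentially the same route as the paper: bound the primes counted by divisibility of $\prod_{t \leq x/\xi_3}(g^{2t}-1)$, order them to get $p_1\cdots p_l \geq m^{l-1}$, and compare logarithms. Your explicit treatment of the $m=1$ edge case (via $\prod p_i \geq 2^l$) is a small but welcome addition that the paper leaves implicit.
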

\subsection{Bounding $M_{a,m,g}(x, \xi_2,\xi_3)$} Throughout this subsection we assume that $m \xi_3 \leq x$ and $\xi_3 \geq \e^{\e} \xi_2$.
It is obvious that
\begin{align}
\begin{split} \label{S_3}
M_{a,m,g}(x, \xi_2,\xi_3) \leq \sum \limits_{\xi_2 < q \leq \xi_3}  \# \{ p \leq x: p \equiv a \mod m \text{ and } p \equiv 1 \mod q \}.
\end{split}
\end{align}
The two progressions can be combined into a single progression modulo $[m,q]$. Using the Brun--Titchmarsh theorem, one gets a good upper bound if $[m,q]$ is large, which happens when $q \nmid m$. The cases $q|m$ give a bad upper bound but we will see that they are rare.
\\\\
\textbf{Case i)} Assume $q|m$. Then the summand in \eqref{S_3} equals $\pi(x;m,a)$ if $a \equiv 1 \mod q$, and $0$ otherwise. Hence, by the Brun--Titchmarsh theorem, the number of primes $p \leq x$ congruent to $a \mod m$ and $1 \mod q$ is
$$
\ll \min \Bigg\{ \frac{x}{\phi(m) \log \l( \frac{2x}{m} \r)}, \frac{x}{m} \Bigg\} \leq \frac{x}{\phi(m) \log \l( \frac{2x}{m} \r)}.$$
\textbf{Case ii)} Assume $q \nmid m$. Then by the Chinese remainder theorem the progressions \\ $p \equiv a \mod m$ and $p \equiv 1 \mod q$ can be combined into a single progression modulo $mq$.
%there is a unique $y \mod mq$ such that
%$p \equiv a \mod m \text{ and } p \equiv 1 \mod q$ if and only if\\ $p \equiv y \mod mq.$
Again applying the Brun--Titchmarsh theorem shows that in this case the number of primes $p \leq x$ congruent to $a \mod m$ and $1 \mod q$ is
\begin{align*}
\ll \min \Bigg\{ \frac{x}{\phi(mq) \log \l( \frac{2x}{mq} \r)}, \frac{x}{mq}+1 \Bigg\} \ll \frac{x}{q \phi(m)\log \l( \frac{2x}{mq} \r)}.
\end{align*} 

Applying the above cases to \eqref{S_3} we can bound $M_{a,m,g}(x, \xi_2,\xi_3)$ by
\begin{align} \label{S_3estimate2}
\begin{split}
\ll \sum \limits_{\substack{\xi_2 < q \leq \xi_3 \\ q | m}} \frac{x}{\phi(m) \log \l( \frac{2x}{m} \r)} + \sum \limits_{\substack{\xi_2 < q \leq \xi_3 \\ q \nmid m}}  \frac{x}{q \phi(m)  \log \l( \frac{2x}{mq} \r)}.
\end{split}
\end{align} 
The number of prime divisors of $m$ which are greater than $\xi_2$, is at most $\frac{\log m}{\log \xi_2}$, hence
\begin{align} \label{part1}
\sum \limits_{\substack{\xi_2 < q \leq \xi_3 \\ q | m}} \frac{x}{\phi(m) \log \l( \frac{2x}{m} \r)} \ll \frac{\log m}{\log \xi_2} 
\frac{x}{\phi(m) \log \l( \frac{2x}{m} \r)}.
\end{align} 
By Mertens' estimate, the second sum in \eqref{S_3estimate2} is bounded by
\begin{align} \label{part3}
\begin{split}
\ll  \frac{x }{ \phi(m)\log \l( \frac{2x}{m\xi_3}  \r)} \sum \limits_{\substack{\xi_2 < q \leq \xi_3 }}  \frac{1}{q}
\ll\frac{x}{\phi(m) \log \l( \frac{2x}{m\xi_3}  \r)} \log \l( \frac{\log \xi_3}{\log \xi_2} \r) .
\end{split}
\end{align}
Combining the estimates \eqref{S_3estimate2}, \eqref{part1} and \eqref{part3}, we arrive at
\begin{align} \label{waseerstlemma}
M_{a,m,g}(x, \xi_2,\xi_3) \ll  \frac{\log m}{\log \xi_2} 
\frac{x}{\phi(m) \log \l( \frac{2x}{m} \r)}
+ \frac{x \log \l( \frac{\log \xi_3}{\log \xi_2} \r)}{  \phi(m) \log \l( \frac{2x}{m \xi_3} \r)  }.
\end{align}
\begin{lemma}  \label{sum 3 small m}
Assume that $\xi_3 \geq \e^{\e} \xi_2$, and that there is an $0<A<1$ such that \\ $m \xi_3 \leq x^{1-A}$. Then
$$M_{a,m,g}(x, \xi_2,\xi_3) \ll \frac{x}{\phi(m) \log x} \l( \frac{\log  m }{\log \xi_2} + \log \l( \frac{\log \xi_3}{\log \xi_2} \r)  \r),$$
where the implied constant depends at most on $A$.
\end{lemma}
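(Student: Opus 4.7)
The plan is to derive Lemma \ref{sum 3 small m} directly from the bound \eqref{waseerstlemma} obtained just above it. That bound already gives
$$M_{a,m,g}(x, \xi_2,\xi_3) \ll  \frac{\log m}{\log \xi_2}
\frac{x}{\phi(m) \log \l( \frac{2x}{m} \r)}
+ \frac{x \log \l( \frac{\log \xi_3}{\log \xi_2} \r)}{  \phi(m) \log \l( \frac{2x}{m \xi_3} \r)  },$$
so all that remains is to replace the two logarithmic denominators $\log(2x/m)$ and $\log(2x/(m\xi_3))$ by $\log x$ at the cost of a constant depending on $A$.

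To do this, I would exploit the hypothesis $m\xi_3 \leq x^{1-A}$. First I would observe that since $\xi_3 \geq 1$ (for $x$ large enough, as $\xi_3 \to \infty$), we have $m \leq m\xi_3 \leq x^{1-A}$, and hence
$$\log \l( \frac{2x}{m} \r) \geq \log(2x^A) = A \log x + \log 2 \gg_A \log x.$$
Similarly, from $m\xi_3 \leq x^{1-A}$ directly,
$$\log \l( \frac{2x}{m\xi_3} \r) \geq \log(2x^A) \gg_A \log x.$$

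Substituting these two lower bounds into \eqref{waseerstlemma} yields
$$M_{a,m,g}(x, \xi_2,\xi_3) \ll \frac{x}{\phi(m) \log x} \l( \frac{\log m}{\log \xi_2} + \log \l( \frac{\log \xi_3}{\log \xi_2} \r) \r),$$
with implied constant depending only on $A$, as claimed. The condition $\xi_3 \geq \e^{\e} \xi_2$ is inherited directly from the derivation of \eqref{waseerstlemma} (it ensures that $\log(\log \xi_3 / \log \xi_2) \geq 0$ so that the Mertens estimate in \eqref{part3} makes sense).

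There is no real obstacle here — the lemma is a packaging step that specializes the general bound \eqref{waseerstlemma} to the regime of moduli and parameters that will actually be used in the proof of Theorem \ref{thm:a.p.'s in a.p.'ssecondversion}. The only thing to keep in mind is that the hypothesis $m\xi_3 \leq x^{1-A}$ (rather than, say, $m\xi_3 \leq x/2$) is precisely what allows one to absorb the logarithms into the implied constant without losing the factor $\log x$ in the denominator, which would otherwise weaken the final error term.
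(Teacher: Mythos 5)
Your proposal is correct and matches the paper's own proof, which likewise deduces $\log\l(\frac{2x}{m}\r) \gg \log\l(\frac{2x}{m\xi_3}\r) \gg_A \log x$ from the hypothesis $m\xi_3 \leq x^{1-A}$ and substitutes into \eqref{waseerstlemma}. No issues.
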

\begin{proof}
Due to the assumptions of the lemma, we have $\log \l( \frac{2x}{m} \r) \gg \log (\frac{2x}{m \xi_3} ) \gg \log x$. Combining this with \eqref{waseerstlemma} yields the required estimate.
\end{proof}
\subsection{An asymptotic formula for $P_{k,g}(x;m,a)$}
For the investigation of the terms \\ $M_{a,m,g}(x, \xi_1,\xi_2)$ and $N_{a,m,g}(x, \xi_1)$, we need to find an asymptotic formula for $P_{k,g}(x;m,a)$. Recall the definitions for $\Delta$, $h$ and $b$ from Theorem \ref{thm:moree2008}, and for $\zeta_m$ and $\sigma_a$ from Theorem \ref{thm:lenstra77}. 
\begin{notation}
For $k,m \in \mathbb{N}$ with $k$ squarefree, we write $F_{m,k}:= \mathbb{Q}(\zeta_m, \zeta_k, \sqrt[k]{g})$. For a prime number $p$, let $(p, F_{m,k}/ \mathbb{Q})$ be the Artin symbol in the Galois group of $F_{m,k}$ over $\mathbb{Q}$.
\end{notation}

\begin{lemma} \label{pinpkg}
Let $p$ be a prime, $a,m, k \in \mathbb{N}$ with $k$ squarefree and $(a,m)=1$. Then we have $p \equiv a \mod m$ and $p \in \mathfrak{P}_{k,g}$ if and only if
$$(p, F_{m,k} ) |_{ \mathbb{Q}(\zeta_m)} =\sigma_a \text{ and } (p, F_{m,k} ) |_{F_{k,k} } = \text{id}.$$ 
\end{lemma}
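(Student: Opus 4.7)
The plan is to split the claimed biconditional into its two independent halves, corresponding to the subfields $\mathbb{Q}(\zeta_m)$ and $F_{k,k}$ of $F_{m,k}$, and verify each half by translating the arithmetic condition into a splitting condition via the standard compatibility $(p, F_{m,k}/\mathbb{Q})|_{L} = (p, L/\mathbb{Q})$ whenever $L \subseteq F_{m,k}$ and $p$ is unramified in $F_{m,k}$. Since neither the condition $p \equiv a \bmod m$ nor the condition $p \in \mathfrak{P}_{k,g}$ sees both subfields simultaneously, it suffices to prove the two equivalences
\[
p \equiv a \bmod m \iff (p, \mathbb{Q}(\zeta_m)/\mathbb{Q}) = \sigma_a, \qquad p \in \mathfrak{P}_{k,g} \iff (p, F_{k,k}/\mathbb{Q}) = \mathrm{id},
\]
and then combine them using the restriction property above.

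For the cyclotomic equivalence, I would invoke the usual description of Frobenius on $\mathbb{Q}(\zeta_m)/\mathbb{Q}$: for $p \nmid m$ (which is automatic once $(a,m)=1$ and $p \equiv a \bmod m$), the Artin symbol $(p,\mathbb{Q}(\zeta_m)/\mathbb{Q})$ is the automorphism $\zeta_m \mapsto \zeta_m^p$, that is, $\sigma_p$. Under the identification $\mathrm{Gal}(\mathbb{Q}(\zeta_m)/\mathbb{Q}) \cong (\mathbb{Z}/m\mathbb{Z})^\ast$, we have $\sigma_p = \sigma_a$ precisely when $p \equiv a \bmod m$. For the Kummer equivalence, apply Lemma \ref{intersection} to rewrite $p \in \mathfrak{P}_{k,g}$ as the conjunction of $k \mid p-1$ and $g^{(p-1)/k} \equiv 1 \bmod p$. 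The first condition is equivalent to $p$ splitting completely in $\mathbb{Q}(\zeta_k)$; granting this, $\mathbb{F}_p^\ast$ already contains a primitive $k$-th root of unity, so the polynomial $x^k - g$ has either no root or exactly $k$ distinct roots in $\mathbb{F}_p$, and the latter occurs exactly when $g$ is a $k$-th power in $\mathbb{F}_p^\ast$, i.e. when $g^{(p-1)/k} \equiv 1 \bmod p$. Thus $p \in \mathfrak{P}_{k,g}$ iff $x^k - g$ splits completely mod $p$, iff $p$ splits completely in $F_{k,k} = \mathbb{Q}(\zeta_k, \sqrt[k]{g})$, iff $(p, F_{k,k}/\mathbb{Q}) = \mathrm{id}$.

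The only technical point is to verify that $p$ is unramified in $F_{m,k}$ so that all Artin symbols are well-defined; this fails only for primes dividing a fixed multiple of $mkg$, and one checks that any $p$ simultaneously satisfying $p \equiv a \bmod m$ and $p \in \mathfrak{P}_{k,g}$ automatically avoids these (for instance, $p \mid g$ would force $0 \equiv 1 \bmod p$). I don't expect a genuine obstacle here: the argument is essentially a packaging of Artin reciprocity in the cyclotomic case with the Kummer-theoretic description of splitting in $\mathbb{Q}(\zeta_k, \sqrt[k]{g})$, and the mild ramification bookkeeping is the only point requiring any care.
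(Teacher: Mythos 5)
Your proof is correct and takes essentially the same route as the paper: both reduce the claim to the standard cyclotomic description of Frobenius on $\mathbb{Q}(\zeta_m)$ together with a Kummer-theoretic translation of the conditions $k \mid p-1$ and $g^{(p-1)/k} \equiv 1 \bmod p$ into the triviality of the Artin symbol on $F_{k,k}$, using Lemma~\ref{intersection} to combine. The only superficial difference is that the paper handles $F_{k,k}$ by restricting separately to the two subfields $\mathbb{Q}(\zeta_k)$ and $\mathbb{Q}(\sqrt[k]{g})$, whereas you argue directly via complete splitting of $x^k - g$; these are equivalent packagings of the same fact.
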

\begin{proof}
The Artin symbol $(p, F_{m,k} ) $ restricted to $ \mathbb{Q}(\zeta_m)$ equals $\sigma_a$ if and only if \\ $p \equiv a \mod m$. Further, $(p, F_{m,k} ) $ is the identity on $\mathbb{Q}(\zeta_k)$ if and only if\\  $p \equiv 1 \mod k$, and it is the identity on $\mathbb{Q}(\sqrt[k]{g})$ if and only if $g^{\frac{p-1}{k}} \equiv 1 \mod p$. Combining these observations with Lemma \ref{intersection} completes the proof.
\end{proof}
We use the following three lemma's from Frei, Koymans and Sofos \cite{sofosfreikoymans}.
\begin{lemma} \label{degree} (Lemma 2.2 from \cite{sofosfreikoymans})
For $k,m \in \mathbb{N}$ with $k$ squarefree we have
$$[F_{m,k}: \mathbb{Q}]= \frac{k_1 \phi([m,k])}{\epsilon(m,k)},$$
where $k_1 \colonequals \frac{k}{(k,h)}$, and
$$\epsilon(m,k) \colonequals \begin{cases}
2 & \text{if } 2|k \text{ and } \Delta | [m,k],\\
1, & \text{else.}
\end{cases}$$
\end{lemma}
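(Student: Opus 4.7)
The plan is to compute $[F_{m,k}:\mathbb{Q}]$ via the tower
$$\mathbb{Q} \subseteq L \colonequals \mathbb{Q}(\zeta_{[m,k]}) \subseteq L\l(\sqrt[k]{g}\r) = F_{m,k}.$$
The lower layer has degree $\phi([m,k])$ since $\mathbb{Q}(\zeta_m, \zeta_k) = \mathbb{Q}(\zeta_{[m,k]})$. Because $k \mid [m,k]$, the field $L$ contains all $k$-th roots of unity, so the upper layer is a Kummer extension; by Kummer theory its degree equals the order $n$ of the class of $g$ in $L^*/(L^*)^k$. The task thus reduces to computing $n$.

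First I would show $n \mid k_1 = k/(k,h)$. Writing $g = g_0^h$ with $g_0 \in \mathbb{Z}$ not a perfect power, one has
$$g^{k_1} = g_0^{h k_1} = \l(g_0^{h/(k,h)}\r)^k \in (\mathbb{Q}^*)^k \subseteq (L^*)^k,$$
so $n \mid k_1$. Since $g \in \mathcal{G}$ is not a perfect square, $h$ must be odd, so $k_1$ and $k$ share the same $2$-adic valuation.

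Next I would determine exactly when $n$ is strictly smaller than $k_1$. If $n \mid k_1/p$ for some prime $p \mid k_1$, then $g^{k_1/p} \in (L^*)^k$, and a short Bezout argument (using $\gcd(h/(k,h),k_1)=1$ hence $\gcd(h/(k,h),p)=1$) forces $g_0 = \zeta' \delta^p$ for some root of unity $\zeta' \in L$ and some $\delta \in L$. The key input is the classical fact that for $g_0 \in \mathbb{Z} \setminus \{0, \pm 1\}$ not a perfect power in $\mathbb{Z}$, this can happen only for $p=2$ and only when $L$ contains $\sqrt{g_0}$. Since $h$ is odd, $\mathbb{Q}(\sqrt{g_0}) = \mathbb{Q}(\sqrt{g})$, which has conductor $|\Delta|$, so the condition becomes $\Delta \mid [m,k]$. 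Conversely, when $2 \mid k$ and $\Delta \mid [m,k]$, writing $g_0 = \gamma^2$ with $\gamma \in L$ gives $g^{k_1/2} = (\gamma^{h/(k,h)})^k \in (L^*)^k$, and a repeated application of the stability statement rules out any further drop. Hence $n = k_1/\epsilon(m,k)$, and multiplying by $\phi([m,k])$ yields the claimed formula.

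The main obstacle is the cyclotomic power stability statement. I would prove it by examining, for $r > 2$, the prime ideal factorization of $(g_0)$ in $\mathbb{Z}[\zeta_N]$: the exponents modulo $r$ at the primes above each rational prime dividing $g_0$ cannot be reconciled with an $r$-th power structure, even after twisting by the finitely many roots of unity in $L$. For $r=2$ the only way to absorb $g_0$ is via the unique quadratic subfield of $\mathbb{Q}(\zeta_N)$ isomorphic to $\mathbb{Q}(\sqrt{g_0})$, which sits inside precisely when the conductor $|\Delta|$ divides $N$.
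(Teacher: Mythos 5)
The paper does not prove this lemma; it cites it directly from Lemma 2.2 of Frei, Koymans and Sofos, so there is no in-paper argument to compare against. Your Kummer-theoretic framework is the standard and correct one: $[L:\mathbb{Q}]=\phi([m,k])$ with $L=\mathbb{Q}(\zeta_{[m,k]})$, and $[F_{m,k}:L]$ equals the order $n$ of $g$ in $L^*/(L^*)^k$, with $n\mid k_1$ via $g^{k_1}=(g_0^{h/(k,h)})^k$. The Bezout step is also fine, but note it gives you more than you state: from $g_0^{k/p}\in(L^*)^k$ one gets that the root of unity $\zeta'$ with $g_0=\zeta'\delta^p$ is a $(k/p)$-th root of unity, hence of \emph{odd} order when $p=2$, since $k$ is squarefree. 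You need to record this, because the ``classical fact'' as you have phrased it (arbitrary root of unity $\zeta'$ in $L$, $p=2\Rightarrow\sqrt{g_0}\in L$) is false: in $L=\mathbb{Q}(\zeta_3)$ one has $3=(-1)\cdot(\sqrt{-3})^2$ with $-1$ a root of unity in $L$, yet $\sqrt{3}\notin L$. Once you note that $\zeta'$ has odd order it is automatically a square among the roots of unity of $L$, and the argument is rescued.

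The genuine gap is your proposed proof of the stability statement for odd primes $r$. The prime-ideal-factorization argument does not work as sketched: if a prime $q\mid g_0$ also divides $N$, then $q$ ramifies in $\mathbb{Q}(\zeta_N)$ with index $\phi(q^{v_q(N)})$, which may well be divisible by $r$. For instance, with $q=7$, $r=3$, $N=7$ one has $(7)=(1-\zeta_7)^6$, a perfect cube of a principal ideal, even though $7$ is not a cube times a root of unity in $\mathbb{Q}(\zeta_7)$; and since roots of unity are units, ``twisting'' by them does not alter the ideal equation, so the obstruction is invisible at the level of ideal factorizations. The clean route is Galois-theoretic: if $g_0=\zeta'\delta^r$ in $L$ with $r$ odd, then $L(\sqrt[r]{g_0})=L(\sqrt[r]{\zeta'})$ lies in a cyclotomic field and is therefore abelian over $\mathbb{Q}$; hence the subfield $\mathbb{Q}(\sqrt[r]{g_0})$ (of degree $r$, since $x^r-g_0$ is irreducible) would be abelian, hence Galois, over $\mathbb{Q}$, forcing $\zeta_r\in\mathbb{Q}(\sqrt[r]{g_0})$ and thus $(r-1)\mid r$, impossible for $r>2$. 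You should replace the ideal-theoretic sketch with this.
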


\begin{lemma} \label{automorphism} (Lemma 2.5 from \cite{sofosfreikoymans})
For $m \in \mathbb{N}$ and $k \in \mathbb{N}$ squarefree, there is an automorphism $\sigma \in \mathrm{Aut} \l( F_{m,k}\r)$ with 
$$ \sigma|_{  \mathbb{Q}(\zeta_m) } = \sigma_a \text{ and } \sigma|_{F_{k,k}}= \mathrm{id}$$
if and only if
\begin{align} \label{conditions}
\begin{split}
& a \equiv 1 \mod (m,k), \text{ and} \\
& 2|k, \Delta \nmid k, \Delta | [m,k]  \text{ implies that } \l( \frac{\gamma}{a} \r) =1.
\end{split}
\end{align} 
If $\sigma$ exists, then it is unique and in the centre of $\mathrm{Aut} \l( F_{m,k} \r)$.
\end{lemma}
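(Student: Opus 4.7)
The plan is to exploit the compositum structure $F_{m,k}=\mathbb{Q}(\zeta_m)\cdot F_{k,k}$, where both factors are Galois over $\mathbb{Q}$. Hence $F_{m,k}/\mathbb{Q}$ is Galois, and writing $E:=\mathbb{Q}(\zeta_m)\cap F_{k,k}$ (which is Galois over $\mathbb{Q}$, being a subfield of the abelian extension $\mathbb{Q}(\zeta_m)$), the restriction map
$$\mathrm{Gal}(F_{m,k}/\mathbb{Q}) \;\longrightarrow\; \mathrm{Gal}(\mathbb{Q}(\zeta_m)/\mathbb{Q}) \times \mathrm{Gal}(F_{k,k}/\mathbb{Q})$$
is injective with image the fibre product over $\mathrm{Gal}(E/\mathbb{Q})$. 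Thus the desired $\sigma$ exists, and is then automatically unique, if and only if $\sigma_a$ and $\mathrm{id}$ have the same restriction to $E$, i.e.\ $\sigma_a|_E=\mathrm{id}$.

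The next step is to identify $E$. Applying Lemma \ref{degree} both to $F_{m,k}$ and to $F_{k,k}$ yields
$$[E:\mathbb{Q}] \;=\; \frac{[\mathbb{Q}(\zeta_m):\mathbb{Q}]\,[F_{k,k}:\mathbb{Q}]}{[F_{m,k}:\mathbb{Q}]} \;=\; \frac{\phi(m)\,k_1\phi(k)/\epsilon(k,k)}{k_1\,\phi([m,k])/\epsilon(m,k)} \;=\; \phi((m,k))\cdot\frac{\epsilon(m,k)}{\epsilon(k,k)},$$
using the standard identity $\phi(m)\phi(k)=\phi((m,k))\phi([m,k])$. Since $\mathbb{Q}(\zeta_{(m,k)})\subseteq E$ already has degree $\phi((m,k))$ over $\mathbb{Q}$, exactly one of two possibilities holds: either $E=\mathbb{Q}(\zeta_{(m,k)})$ (when $\epsilon(m,k)=\epsilon(k,k)$), or $E/\mathbb{Q}(\zeta_{(m,k)})$ is a degree-$2$ extension, this being the only way to get the ratio $\epsilon(m,k)/\epsilon(k,k)=2$, which unravels to $2\mid k$, $\Delta\mid[m,k]$, $\Delta\nmid k$.

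In the first case, $\sigma_a|_E=\mathrm{id}$ reduces to $a\equiv 1\pmod{(m,k)}$, which is the first line of \eqref{conditions}, while the second line is vacuous. In the second case, since $2\mid k$ the field $F_{k,k}$ contains $\sqrt{g}$, and hence the quadratic field $\mathbb{Q}(\sqrt{\Delta})$. I would then show that $E=\mathbb{Q}(\zeta_{(m,k)},\sqrt{\gamma})$, where $\gamma$ is the quantity from Theorem \ref{thm:moree2008}: the construction of $\gamma$, splitting $\Delta=(m,\Delta)\cdot b$ and attaching the sign $(-1)^{(b-1)/2}$ when $b$ is odd, is tailored so that the fundamental discriminant of $\mathbb{Q}(\sqrt{\gamma})$ has conductor dividing $m$, forcing $\sqrt{\gamma}\in\mathbb{Q}(\zeta_m)$, while $\mathbb{Q}(\sqrt{\gamma})=\mathbb{Q}(\sqrt{\Delta})$ sits inside $F_{k,k}$. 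The Kronecker-symbol formula $\sigma_a(\sqrt{\gamma})=\left(\frac{\gamma}{a}\right)\sqrt{\gamma}$ then turns $\sigma_a|_E=\mathrm{id}$ into the combined condition $a\equiv 1\pmod{(m,k)}$ and $\left(\frac{\gamma}{a}\right)=1$, matching \eqref{conditions} exactly.

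Centrality of the unique $\sigma$ I would deduce formally from the fibre-product description: for any $\tau\in\mathrm{Aut}(F_{m,k})$, both $\sigma\tau$ and $\tau\sigma$ have restriction $\sigma_a\circ\tau|_{\mathbb{Q}(\zeta_m)}=\tau|_{\mathbb{Q}(\zeta_m)}\circ\sigma_a$ to $\mathbb{Q}(\zeta_m)$ (since $\mathrm{Gal}(\mathbb{Q}(\zeta_m)/\mathbb{Q})$ is abelian) and restriction $\tau|_{F_{k,k}}$ to $F_{k,k}$ (since $\sigma$ is trivial on $F_{k,k}$), so uniqueness forces $\sigma\tau=\tau\sigma$. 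The hard part of this argument, and the step where I expect to spend the most care, is the explicit identification $E=\mathbb{Q}(\zeta_{(m,k)},\sqrt{\gamma})$ in the second case: verifying that the signed quantity $\gamma$ of Theorem \ref{thm:moree2008} is precisely the twist of $\Delta$ that makes $\sqrt{\gamma}$ lie in $\mathbb{Q}(\zeta_m)$, and checking that its Kronecker symbol against $a$ captures the Galois action correctly, is a sign-and-conductor bookkeeping computation that is routine in spirit but easy to mis-sign.
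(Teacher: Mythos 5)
The paper does not prove this lemma; it is imported verbatim as Lemma 2.5 of \cite{sofosfreikoymans}, so there is no in-paper argument to compare against. Your plan is the standard entanglement argument used in that source and in the work of Moree and Lenstra--Moree--Stevenhagen: reduce existence, uniqueness and centrality of $\sigma$ to the condition that $\sigma_a$ acts trivially on $E=\mathbb{Q}(\zeta_m)\cap F_{k,k}$ via the fibre-product description of $\mathrm{Gal}(F_{m,k}/\mathbb{Q})$, compute $[E:\mathbb{Q}]=\phi((m,k))\,\epsilon(m,k)/\epsilon(k,k)$ from Lemma \ref{degree}, and identify the quadratic entanglement in the exceptional case $2\mid k$, $\Delta\nmid k$, $\Delta\mid[m,k]$. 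The outline is correct; the one piece of genuine remaining work --- showing $\sqrt{\gamma}\in\mathbb{Q}(\zeta_m)\cap F_{k,k}$ while $\sqrt{\gamma}\notin\mathbb{Q}(\zeta_{(m,k)})$, so that $E=\mathbb{Q}(\zeta_{(m,k)},\sqrt{\gamma})$ --- is precisely the conductor--sign computation that motivates the definition of $\gamma$, and you have flagged it appropriately.
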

The following lemma is a slightly weaker formulation of Lemma 2.3 from \cite{sofosfreikoymans}.
\begin{lemma} \label{discbound}
We have
$$\frac{\log | \Delta \l( F_{m,k} \r) |}{[F_{m,k}: \mathbb{Q}] } \leq \log k + \log([m,k]) + 2\log |g|,$$
where $\Delta(F_{m,k})$ denotes the discriminant of $F_{m,k}$.
\end{lemma}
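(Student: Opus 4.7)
The plan is to split the discriminant along the tower $\mathbb{Q}\subseteq K\subseteq F_{m,k}$, with $K:=\mathbb{Q}(\zeta_{[m,k]})$, via the classical tower formula for discriminant ideals
$$\mathfrak{d}_{F_{m,k}/\mathbb{Q}}=\mathfrak{d}_{K/\mathbb{Q}}^{[F_{m,k}:K]}\cdot N_{K/\mathbb{Q}}\l(\mathfrak{d}_{F_{m,k}/K}\r).$$
After taking the positive generators and logarithms, then dividing by $[F_{m,k}:\mathbb{Q}]=[F_{m,k}:K]\cdot\phi([m,k])$, the desired bound decouples into a cyclotomic contribution and a Kummer contribution.

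For the cyclotomic piece, the standard estimate $|\Delta_{K/\mathbb{Q}}|\leq[m,k]^{\phi([m,k])}$ contributes exactly $\log[m,k]$. For the Kummer piece, let $\alpha:=\sqrt[k]{g}\in\mathcal{O}_{F_{m,k}}$ and let $f\in\mathcal{O}_K[X]$ be its minimal polynomial over $K$, of degree $d:=[F_{m,k}:K]\leq k$. Since the different $\mathfrak{D}_{F_{m,k}/K}$ divides $(f'(\alpha))$, one has $\mathfrak{d}_{F_{m,k}/K}\mid(\mathrm{disc}(f))$, and it suffices to bound $|N_{K/\mathbb{Q}}(\mathrm{disc}(f))|$. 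Because $\zeta_k\in K$, Kummer theory identifies $f(X)=X^d-\beta$ with $\beta:=\alpha^d\in K$ a $(k/d)$-th root of $g$; combining $\mathrm{disc}(f)=\pm d^d\beta^{d-1}$ with the norm identity $|N_{K/\mathbb{Q}}(\beta)|^{k/d}=|g|^{[K:\mathbb{Q}]}$, i.e.\ $|N_{K/\mathbb{Q}}(\beta)|=|g|^{d[K:\mathbb{Q}]/k}$, yields
$$\log\bigl|N_{K/\mathbb{Q}}(\mathrm{disc}(f))\bigr|\leq d[K:\mathbb{Q}]\log d+\tfrac{d(d-1)[K:\mathbb{Q}]}{k}\log|g|.$$
Dividing by $[F_{m,k}:\mathbb{Q}]=d[K:\mathbb{Q}]$ the Kummer contribution is at most $\log d+\tfrac{d-1}{k}\log|g|\leq\log k+\log|g|$, so summing the two pieces gives the stronger bound $\log k+\log[m,k]+\log|g|$, comfortably inside the claimed $\log k+\log[m,k]+2\log|g|$.

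The main delicacy to anticipate is the case $d<k$, where $X^k-g$ becomes reducible over $K$ and its naive discriminant $\pm k^kg^{k-1}$ is no longer the correct quantity to use. Kummer theory is what supplies the genuine minimal polynomial $X^d-\beta$ with an explicit $(k/d)$-th root $\beta$ of $g$ lying in $K$, and this is what makes the estimate uniform in $d$. The extra factor of $2$ on $\log|g|$ in the statement is comfortable slack that also absorbs any cruder estimate one might prefer (for instance simply using $|\mathrm{disc}(X^k-g)|=k^k|g|^{k-1}$ throughout), and it absorbs the loss from replacing ideals by positive generators.
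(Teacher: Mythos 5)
Your proof is correct, but note that the paper does not actually prove this lemma at all: it is imported verbatim (in slightly weakened form) from Lemma 2.3 of Frei--Koymans--Sofos \cite{sofosfreikoymans}, where the standard route is the Serre/Hensel-type estimate bounding $\log|\Delta(F)|/[F:\mathbb{Q}]$ by $\log[F:\mathbb{Q}]$ plus the logarithms of the ramified primes, which for $F_{m,k}$ all divide $[m,k]g$. Your argument is therefore a genuinely self-contained alternative, and every step checks out: the tower formula through $K=\mathbb{Q}(\zeta_{[m,k]})$, the bound $|\Delta_K|\le [m,k]^{\phi([m,k])}$, the divisibility $\mathfrak{d}_{F_{m,k}/K}\mid(\mathrm{disc}\,f)$ coming from $\mathfrak{D}_{F_{m,k}/K}\mid(f'(\alpha))$, and the Kummer-theoretic identification $f=X^d-\beta$ with $\beta=\alpha^d$ satisfying $\beta^{k/d}=g$ (valid because $\zeta_k\in K$, so $K(\alpha)/K$ is Galois with group embedding into $\mu_k$, forcing $d\mid k$ and $\alpha^d\in K$). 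The norm computation $|N_{K/\mathbb{Q}}(\beta)|=|g|^{d[K:\mathbb{Q}]/k}$ is right, and the final absorption of $\tfrac{d-1}{k}\log|g|$ into $\log|g|$ uses $|g|\ge 2$, which holds for all $g\in\mathcal{G}$. Two points worth making explicit in a written version: that $\beta\in\mathcal{O}_K$ (it is a power of a root of the monic $X^k-g$ and is fixed by $\mathrm{Gal}(F_{m,k}/K)$), so that $f\in\mathcal{O}_K[X]$ and the different--divides--$f'(\alpha)$ lemma applies; and that the degenerate case $d=1$ (e.g.\ $k=1$) contributes discriminant $1$ and causes no trouble. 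Your route in fact yields the sharper constant $\log|g|$ in place of $2\log|g|$; what the cruder ramified-primes argument buys in exchange is that it needs no information about minimal polynomials and generalises immediately to other towers, which is presumably why \cite{sofosfreikoymans} use it.
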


\begin{lemma} \label{algebra} 
Let $g \in \mathcal{G}$, $a,m \in \mathbb{N}$ coprime and $k \in \mathbb{N}$ squarefree. Then we have
$$\{p: p \equiv a \mod m, p \in \mathfrak{P}_{k,g} \} \neq \emptyset$$
if and only if the conditions in \ref{conditions} are satisfied.
Assuming GRH$(L,g)$, we have in this case
$$P_{k,g}(x;m,a)= \frac{\epsilon(m,k)}{k_1 \phi([m,k])} \mathrm{li}(x) + O \l( \sqrt{x} \l( \log [m,k] + \log |g| + \log x \r) \r).$$
\end{lemma}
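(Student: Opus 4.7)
The plan is to convert the counting of $P_{k,g}(x;m,a)$ into an application of the effective Chebotarev density theorem for the Galois extension $F_{m,k}/\mathbb{Q}$, exploiting the preparatory Lemmas \ref{pinpkg}, \ref{degree}, \ref{automorphism}, and \ref{discbound}. By Lemma \ref{pinpkg}, for any prime $p$ unramified in $F_{m,k}$ the two conditions $p \equiv a \mod m$ and $p \in \mathfrak{P}_{k,g}$ are equivalent to requiring that the Frobenius class $(p, F_{m,k}/\mathbb{Q})$ contain an element $\sigma$ with $\sigma|_{\mathbb{Q}(\zeta_m)} = \sigma_a$ and $\sigma|_{F_{k,k}} = \mathrm{id}$. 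The primes ramifying in $F_{m,k}$ all divide $[m,k]\cdot g$, so there are $O(\log[m,k] + \log|g|)$ of them, a quantity easily absorbed by the claimed error term.

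For the first assertion I would invoke Lemma \ref{automorphism}: an element $\sigma$ with the required restrictions exists in $\mathrm{Gal}(F_{m,k}/\mathbb{Q})$ if and only if the conditions \eqref{conditions} hold. When they fail, no unramified prime satisfies both congruences, and an inspection of the finitely many ramified primes rules those out as well, so the set is empty. When they hold, $\sigma$ exists and unconditional Chebotarev produces infinitely many primes with that Frobenius, so the set is non-empty.

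Turning to the asymptotic, I would assume \eqref{conditions} holds. Lemma \ref{automorphism} then says $\sigma$ is unique and central in $\mathrm{Gal}(F_{m,k}/\mathbb{Q})$, so its conjugacy class $C = \{\sigma\}$ is a singleton. Under GRH$(L,g)$, which implies GRH for the Dedekind zeta function of $F_{m,k}$, the Lagarias--Odlyzko effective Chebotarev density theorem in its refined form for singleton classes yields
$$\pi_C(x, F_{m,k}/\mathbb{Q}) = \frac{1}{[F_{m,k}:\mathbb{Q}]}\,\li(x) + O\l(\sqrt{x}\l(\frac{\log|\Delta(F_{m,k})|}{[F_{m,k}:\mathbb{Q}]} + \log x\r)\r).$$
Substituting $[F_{m,k}:\mathbb{Q}] = k_1\phi([m,k])/\epsilon(m,k)$ from Lemma \ref{degree} delivers the announced main term. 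For the error, Lemma \ref{discbound} gives $\log|\Delta(F_{m,k})|/[F_{m,k}:\mathbb{Q}] \leq \log k + \log[m,k] + 2\log|g|$, and since $k \mid [m,k]$ the $\log k$ summand is absorbed, leaving $O(\sqrt{x}(\log[m,k] + \log|g| + \log x))$, as required.

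The main obstacle is to justify the precise shape of effective Chebotarev used: the bound whose error term scales like $1/[F_{m,k}:\mathbb{Q}]$ rather than growing with the degree. This $1/n_L$ saving, essential for matching the discriminant bound of Lemma \ref{discbound} against the target error, is exactly what forces the centrality of $\sigma$ (so that $|C|=1$) and justifies the use of GRH$(L,g)$. A secondary care point is the ``only if'' direction of the non-emptiness statement: one must rule out sporadic contributions from primes ramifying in $F_{m,k}$, but this amounts to checking that any such prime forces small-modulus congruences already incompatible with \eqref{conditions}.
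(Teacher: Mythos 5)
Your proposal is correct and takes essentially the same route as the paper: both translate the conditions into a Frobenius/Artin symbol condition via Lemma \ref{pinpkg}, use Lemma \ref{automorphism} to characterize when the required (central, unique) automorphism exists, and then invoke the GRH-conditional effective Chebotarev density theorem — the paper cites Serre's version, you cite Lagarias--Odlyzko in its refined (Serre) form, which is the same estimate — combined with Lemma \ref{degree} for the main term and Lemma \ref{discbound} to control the conductor contribution in the error. You correctly identify the key technical point, namely that centrality of $\sigma$ (so $|C|=1$) is what produces the $1/[F_{m,k}:\mathbb{Q}]$ scaling needed to match Lemma \ref{discbound} against the target error.
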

\begin{proof}
The first claim follows from Lemma \ref{pinpkg} and Lemma \ref{automorphism}. Now assume the conditions in \ref{conditions} are satisfied. Then the asymptotic formula for $P_{k,g}(x;m,a)$ follows from Serre's version of the Chebotarev density theorem \cite{Serre}, under assumption of GRH$(L,g)$, in combination with Lemma \ref{discbound}.
\end{proof}

\subsection{Bounding $M_{a,m,g}(x, \xi_1,\xi_2)$}
Using the notation from Lemma \ref{degree}, we have $\frac{1}{q_1} \ll \frac{\log |g|}{q}$ for each prime $q$. Thus,
by Lemma \ref{algebra} we can bound $M_{a,m,g}(x, \xi_1,\xi_2)$ by
\begin{align*}
\ll \sum  \limits_{\xi_1 < q \leq \xi_2} P_{q,g}(x;m,a)
\ll  \sum  \limits_{\xi_1 < q \leq \xi_2} \l( \log |g| \frac{\text{li}(x)}{q \phi([m,q])} + \sqrt{x} \l( \log x + \log |g| \r) \r)  .
\end{align*}
Using $\pi(\xi_2) \ll \frac{\xi_2}{\log \xi_2}$, the second term makes the following contribution,
\begin{align*}
\ll \frac{\xi_2}{\log \xi_2} \sqrt{x} \log x \log |g|.
\end{align*} 
In the first term, the many terms with $q \nmid m$ have a good upper bound, and the fewer terms with $q|m$ have a bad upper bound. Namely, 
\begin{align*}
\sum  \limits_{\substack{\xi_1 < q \leq \xi_2\\ q \nmid m}} \l(  \frac{1}{q \phi([m,q])} \r)  \ll  \frac{1}{\phi(m)} \sum  \limits_{\substack{\xi_1 < q \leq \xi_2}}   \frac{1}{q^2} \ll \frac{1}{\phi(m) } \frac{1}{\xi_1 \log \xi_1},
\end{align*}
where the estimate $\sum \limits_{q > \alpha} \frac{1}{q^2} \ll \frac{1}{\alpha \log \alpha}$ can be easily proved via partial summation.
Because $m$ has at most $\frac{\log m}{\log \xi_1}$ prime divisors $q>\xi_1$, we see that
\begin{align*}
\sum  \limits_{\substack{\xi_1 < q \leq \xi_2\\ q | m}} \frac{1}{q \phi([m,q])}  &= \frac{1}{\phi(m)} \sum  \limits_{\substack{\xi_1 < q \leq \xi_2\\ q | m}} \frac{1}{q } \leq \frac{1}{\phi(m)}\frac{\log m}{\log \xi_1} \frac{1}{\xi_1}.
\end{align*}

\begin{lemma} \label{lem:sum2}
We have
$$M_{a,m,g}(x, \xi_1,\xi_2) \ll \log |g| \frac{\log m}{\phi(m) \xi_1 \log \xi_1} \frac{x}{\log x} +  \log |g| \sqrt{x} \log x \frac{\xi_2}{\log \xi_2}.$$
\end{lemma}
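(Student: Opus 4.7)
The plan is to reduce to an estimate for $P_{q,g}(x;m,a)$ via a union bound: any prime counted by $M_{a,m,g}(x,\xi_1,\xi_2)$ lies in $\mathfrak{P}_{q,g}$ for some prime $\xi_1 < q \leq \xi_2$, so
\[
M_{a,m,g}(x,\xi_1,\xi_2) \leq \sum_{\xi_1 < q \leq \xi_2} P_{q,g}(x;m,a).
\]
I would then apply Lemma \ref{algebra} with $k=q$, observing that $q_1 = q/(q,h)$ satisfies $q_1 \gg q/\log|g|$ since the exponent $h$ is bounded by $\log_2|g|$; this is where the factor $\log|g|$ in the claim enters. Lemma \ref{algebra} then contributes a main term of order $\mathrm{li}(x)/(q_1 \phi([m,q]))$ and an error of order $\sqrt{x}(\log[m,q] + \log|g| + \log x)$ for each $q$ (and zero when the conditions \eqref{conditions} fail, which only helps).

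The error term is the easy part: summing over $q \leq \xi_2$ and using $\pi(\xi_2) \ll \xi_2/\log \xi_2$, together with the standing assumption $m \ll x^{\epsilon}$ (which ensures $\log[m,q] \ll \log x$), produces a total contribution of order $\log|g| \sqrt{x} \log x \cdot \xi_2/\log \xi_2$, matching the second summand of the claim.

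For the main term I would split the sum over $q$ according to whether $q \nmid m$ or $q \mid m$. In the generic case $q \nmid m$, one has $\phi([m,q]) = (q-1)\phi(m) \asymp q\phi(m)$, so the contribution is
\[
\ll \frac{1}{\phi(m)} \sum_{q > \xi_1} \frac{1}{q^2} \ll \frac{1}{\phi(m) \xi_1 \log \xi_1},
\]
by partial summation. The harder case is $q \mid m$, where $[m,q] = m$ and each summand is just $\frac{1}{q\phi(m)}$, so the bound depends on how many large prime divisors $m$ can have. The main obstacle here, and the reason for the $\log m$ loss in the final estimate, is the sparsity bound that $m$ admits at most $\log m / \log \xi_1$ prime divisors exceeding $\xi_1$; this yields a contribution of $\frac{\log m}{\phi(m) \xi_1 \log \xi_1}$, which dominates the $q \nmid m$ case.

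Combining both cases, inserting the $\log|g|$ factor from the $1/q_1$ bound, and replacing $\mathrm{li}(x)$ by $O(x/\log x)$ gives the first summand of the lemma. Adding back the aggregated error-term contribution from the second paragraph completes the proof.
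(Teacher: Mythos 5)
Your proposal is correct and follows essentially the same argument as the paper: a union bound reduces the count to $\sum_{\xi_1<q\le\xi_2}P_{q,g}(x;m,a)$, Lemma \ref{algebra} is applied together with $\frac{1}{q_1}\ll\frac{\log|g|}{q}$, the Chebotarev error terms are summed via $\pi(\xi_2)\ll\xi_2/\log\xi_2$, and the main term is split into the cases $q\mid m$ and $q\nmid m$, with the latter handled by $\sum_{q>\xi_1}q^{-2}\ll\frac{1}{\xi_1\log\xi_1}$ and the former by the sparsity bound that $m$ has at most $\frac{\log m}{\log\xi_1}$ prime divisors exceeding $\xi_1$. The only small addition is your justification of $\frac{1}{q_1}\ll\frac{\log|g|}{q}$ via $h\le\log_2|g|$, which the paper states without proof.
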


\subsection{The main term $N_{a,m,g}(x, \xi_1)$}
\begin{notation}
For $k \in \mathbb{N}$, let $P^+(k)$ be the largest prime divisor of $k$.
\end{notation}
In this section we assume that $4^{\xi_1} \leq x$.
By the inclusion-exclusion principle and by Lemma \ref{intersection} we have
$$N_{a,m,g}(x, \xi_1)= \sum \limits_{\substack{k \in \mathbb{N} \\ P^+(k) \leq \xi_1}} \mu(k)P_{k,g}(x;m,a). $$
Every $k$ in the sum can be bounded as
$$k \leq \prod \limits_{p \leq \xi_1} p \leq 4^{\xi_1} \leq x.$$
Applying Lemma \ref{algebra} and using the bound for $k$ in the error term we get
\begin{align} \label{maintermstart}
N_{a,m,g}(x, \xi_1)=  \frac{\li (x)}{\phi(m)} \sum \limits_{\substack{k \in \mathbb{N} \\ P^+(k) \leq \xi_1}} f(k)  + O \l(4^{\xi_1} \sqrt{x} \log x \log |g| \r),
\end{align}
where $f$ is defined as
$$f(k) \colonequals
\begin{cases}
 \frac{\mu(k) \epsilon(m,k) \phi((m,k))}{k_1 \phi(k)},& \text{if \eqref{conditions} holds,}\\
 0, & \text{else.}
 \end{cases}$$
 Because of the factor $\epsilon(m,k)$, the function $f$ is not multiplicative, and the sum in \eqref{maintermstart} cannot be written as an Euler product. Instead, we write
\begin{align} \label{new1}
\sum \limits_{\substack{k \in \mathbb{N} \\ P^+(k) \leq \xi_1}} f(k) = \sum \limits_{k=1}^{\infty} f(k) - \sum \limits_{\substack{k \in \mathbb{N} \\ P^+(k) > \xi_1}} f(k). 
\end{align}
We will bound the second sum, and for the first sum we have the following lemma.
\begin{lemma} \label{delta}
We have
$$ \frac{1}{\phi(m)} \sum \limits_{k=1}^{\infty} f(k)= \delta(a,m,g).$$
\end{lemma}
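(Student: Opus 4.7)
The plan is to show that each term of $\frac{1}{\phi(m)}\sum_k f(k)$ matches the corresponding term in Lenstra's formula for $\delta(a,m,g)$ given in Theorem \ref{thm:lenstra77}. Only squarefree $k$ contribute in either sum (because of the $\mu(k)$ factor and the definition of $f$), so it suffices to prove, for squarefree $k \in \mathbb{N}$,
\begin{equation*}
\frac{\epsilon(m,k)\,\phi((m,k))}{\phi(m)\,k_1\,\phi(k)} \mathbbm{1}[\eqref{conditions} \text{ holds}] \;=\; \frac{c_a(k)}{[F_{m,k}:\mathbb{Q}]}.
\end{equation*}

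For the denominator, I would first invoke the elementary identity $\phi([m,k])\phi((m,k))=\phi(m)\phi(k)$ (a multiplicative check at each prime), and then apply Lemma \ref{degree} to rewrite
\begin{equation*}
\frac{\epsilon(m,k)\,\phi((m,k))}{\phi(m)\,k_1\,\phi(k)} = \frac{\epsilon(m,k)}{k_1\,\phi([m,k])} = \frac{1}{[F_{m,k}:\mathbb{Q}]}.
\end{equation*}
It then remains to identify the indicator functions: I would show that $c_a(k)=1$ is equivalent to condition \eqref{conditions}. This is where Lemma \ref{automorphism} enters. Set $K_1=\mathbb{Q}(\zeta_m)$ and $K_2=F_{k,k}=\mathbb{Q}(\zeta_k,\sqrt[k]{g})$, both Galois over $\mathbb{Q}$. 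A standard fact about composita says that the natural map
\begin{equation*}
\mathrm{Gal}(K_1K_2/\mathbb{Q}) \longrightarrow \mathrm{Gal}(K_1/\mathbb{Q}) \times \mathrm{Gal}(K_2/\mathbb{Q})
\end{equation*}
is injective with image $\{(\tau_1,\tau_2):\tau_1|_{K_1\cap K_2}=\tau_2|_{K_1\cap K_2}\}$. Applying this with $\tau_1=\sigma_a$ and $\tau_2=\mathrm{id}$, we see that an automorphism $\sigma\in\mathrm{Aut}(F_{m,k})$ with $\sigma|_{K_1}=\sigma_a$ and $\sigma|_{K_2}=\mathrm{id}$ exists if and only if $\sigma_a$ restricted to $K_1\cap K_2$ is the identity, which is precisely the definition of $c_a(k)=1$. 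By Lemma \ref{automorphism}, the existence of such a $\sigma$ is equivalent to \eqref{conditions}, so the two indicator functions agree.

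Finally, combining the two displays gives $f(k)/\phi(m)=\mu(k)c_a(k)/[F_{m,k}:\mathbb{Q}]$ for every squarefree $k$, and both sides vanish when $k$ is not squarefree. Summing over $k \in \mathbb{N}$ — the series converges absolutely since $[F_{m,k}:\mathbb{Q}]\gg k$ by Lemma \ref{degree} — yields exactly $\delta(a,m,g)$ as defined in Theorem \ref{thm:lenstra77}. The only genuinely non-mechanical step is the compositum argument identifying $c_a(k)$ with condition \eqref{conditions}; every other manipulation is either a direct appeal to Lemma \ref{degree} or the elementary totient identity $\phi([m,k])\phi((m,k))=\phi(m)\phi(k)$.
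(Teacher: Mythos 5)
Your proof is correct and follows the same basic strategy as the paper: match terms, use the elementary identity $\phi([m,k])\phi((m,k)) = \phi(m)\phi(k)$ together with Lemma~\ref{degree} to identify the denominator with $[F_{m,k}:\mathbb{Q}]$, and then identify the indicator of \eqref{conditions} with $c_a(k)$. The one step where you genuinely diverge is the last: the paper simply cites the proof of Theorem~1.2 in \cite{moree2008} for the equivalence $c_a(k)=1 \Longleftrightarrow \eqref{conditions}$, whereas you derive it from Lemma~\ref{automorphism} combined with the standard description of $\mathrm{Gal}(K_1K_2/\mathbb{Q})$ as the fibred product of $\mathrm{Gal}(K_1/\mathbb{Q})$ and $\mathrm{Gal}(K_2/\mathbb{Q})$ over $\mathrm{Gal}(K_1\cap K_2/\mathbb{Q})$. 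That is a clean, self-contained alternative to the citation and it uses nothing beyond facts already present in the paper; the cost is negligible, and the gain is that Lemma~\ref{delta} no longer depends on a detail buried in a different reference. One small imprecision: to justify absolute convergence you assert $[F_{m,k}:\mathbb{Q}]\gg k$, which by itself is not enough (compare $\sum 1/k$). Lemma~\ref{degree} actually gives $[F_{m,k}:\mathbb{Q}] \geq k_1\phi([m,k])/2 \geq k\phi(k)/(2h)$, which grows like $k^{2}/\log\log k$ and does yield absolute convergence; in any case the convergence of the defining series for $\delta(a,m,g)$ is already part of Lenstra's Theorem~\ref{thm:lenstra77} and need not be re-established here.
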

\begin{proof}
Recall from Theorem \ref{thm:lenstra77} the definition for $c_a(k)$, and the expression for $\delta(a,m,g)$.
As stated in the proof of Theorem 1.2 from \cite{moree2008} we have $c_a(k)=1$ if and only if \eqref{conditions} holds, hence
\begin{align*} 
\frac{1}{\phi(m)}f(k)=
 \frac{\mu(k) \epsilon(m,k) c_a(k) }{k_1 \phi([m,k])}.
\end{align*} 
Using Lemma \ref{degree} we get
\begin{align*}
\frac{1}{\phi(m)} f(k)= \frac{ \mu(k) c_a(k)}{[\mathbb{Q}(\zeta_m,\zeta_k, \sqrt[k]{g}): \mathbb{Q}]}.
\end{align*}
\end{proof}
In order to bound the second sum in \eqref{new1}, we note that
\begin{align} \label{new2}
\sum \limits_{\substack{k \in \mathbb{N} \\ P^+(k) > \xi_1}} f(k) \ll \log |g| \sum \limits_{\substack{k \in \mathbb{N} \\ P^+(k) > \xi_1}} \frac{\mu(k)^2 \phi((m,k))}{k \phi(k)} .
\end{align}
Using Euler products we get
\begin{align} \label{new3}
\begin{split}
\sum \limits_{\substack{k \in \mathbb{N} \\ P^+(k) > \xi_1}} \frac{\mu(k)^2 \phi((m,k))}{k \phi(k)}  &= \sum \limits_{k =1}^{\infty} \frac{\mu(k)^2 \phi((m,k))}{k \phi(k)}  - \sum \limits_{\substack{k \in \mathbb{N} \\ P^+(k) \leq  \xi_1}} \frac{\mu(k)^2 \phi((m,k))}{k \phi(k)} 
\\&=\prod \limits_{p} \l( 1 + \frac{\phi((m,p))}{p(p-1)} \r) - \prod \limits_{p \leq \xi_1} \l( 1 + \frac{\phi((m,p))}{p(p-1)} \r)
\\&= \prod \limits_{p} \l( 1 + \frac{\phi((m,p))}{p(p-1)} \r) \l( 1 -  \prod \limits_{p > \xi_1} \frac{1}{1 + \frac{\phi((m,p))}{p(p-1)} } \r).
\end{split}
\end{align}
Now using $\log \l( \frac{1}{1+x} \r) = O(x)$ when $|x| \leq \frac{1}{2}$, we get
\begin{align*}
\log \l( \prod \limits_{p> \xi_1} \frac{1}{1+ \frac{\phi((m,p))}{p(p-1)} } \r) & \ll \sum \limits_{p > \xi_1} \frac{\phi((m,p))}{p(p-1)} 
 \\&=  \sum \limits_{\substack{p > \xi_1 \\ p|m}} \frac{1}{p} +  \sum \limits_{\substack{p > \xi_1 \\ p \nmid m}} \frac{1}{p(p-1)}  
\ll  \frac{\log(2m)}{\xi_1 \log \xi_1}.
\end{align*}
Because of the estimate $e^x=1+O(x)$, when $|x| \leq \frac{1}{2}$, we see that
\begin{align}\label{new4}
\prod \limits_{p> \xi_1} \frac{1}{1+ \frac{\phi((m,p))}{p(p-1)} } =  1+ O \l( \frac{\log(2m)}{\xi_1 \log \xi_1} \r).
\end{align}
The other product is estimated in the following way,
\begin{align} \label{new5}
\begin{split}
 \prod \limits_{p} \l( 1 + \frac{\phi((m,p))}{p(p-1)} \r)  &\ll \prod \limits_{p|m} \l( 1 + \frac{1}{p} \r)  
 \\&\ll \frac{m}{\phi(m)} \ll \log \log (10m).
 \end{split}
\end{align}
Next one can combine \eqref{new2} up to \eqref{new5} to bound $\sum \limits_{\substack{k \in \mathbb{N} \\ P^+(k) > \xi_1}} f(k)$. One combines the acquired upper bound with \eqref{maintermstart}, \eqref{new1} and Lemma \ref{delta} to obtain the following result.

%Combining \eqref{new3}, \eqref{new4} and \eqref{new5} we get
%$$\sum \limits_{\substack{k \in \mathbb{N} \\ P^+(k) > \xi_1}} \frac{\mu(k)^2 \phi((m,k))}{k \phi(k)} \ll \frac{\log(2m) \log \log (10m)}{\xi_1 \log \xi_1}.$$
%Combining this with \eqref{new1} and \eqref{new2} yields
%$$\sum \limits_{\substack{k \in \mathbb{N} \\ P^+(k) \leq \xi_1}} f(k) = \sum \limits_{k=1}^{\infty} f(k) +O \l( \log |g|  \frac{\log(2m) \log \log (10m)}{\xi_1 \log \xi_1}  \r).$$
%Substituting this in \eqref{maintermstart} finally yields
%\begin{align} \label{finally}
%N_{a,m,g}(x, \xi_1)
%= \frac{\li(x)}{\phi(m)} \sum \limits_{k=1}^{\infty} f(k) + O\l( \frac{x \log |g|}{\phi(m) \log x}  \frac{\log (2m) \log \log (10m)}{\xi_1 \log \xi_1} \r)  + O(\log |g| 4^{\xi_1} \sqrt{x} \log x).
%\end{align}
%
%
%
%Combining the above lemma with \eqref{finally} we get the next result.
\begin{lemma} \label{finalmainterm}
Assume that $4^{\xi_1} \leq x$. Then
\begin{align*}
N_{a,m,g}(x, \xi_1)
&= \delta(a,m,g) \li(x) + O\l( \frac{x \log |g|}{\phi(m) \log x}  \frac{\log (2m) \log \log (10m)}{\xi_1 \log \xi_1} \r)  \\&+ O( \log |g| 4^{\xi_1} \sqrt{x} \log x).
\end{align*}
\end{lemma}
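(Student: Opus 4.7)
The plan is to assemble the pieces that have already been prepared. Starting from \eqref{maintermstart}, I would substitute the splitting \eqref{new1}, so that
\[
N_{a,m,g}(x,\xi_1) = \frac{\li(x)}{\phi(m)} \sum_{k=1}^{\infty} f(k) - \frac{\li(x)}{\phi(m)} \sum_{\substack{k \in \mathbb{N} \\ P^+(k) > \xi_1}} f(k) + O\bigl(4^{\xi_1} \sqrt{x}\, \log x\, \log|g|\bigr).
\]
By Lemma \ref{delta} the first term is exactly $\delta(a,m,g)\li(x)$, which produces the main term. The third term is already in the form required by the second error term in the statement of the lemma, so it remains only to control the tail sum.

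For the tail, I would invoke \eqref{new2} to reduce the problem to bounding
\[
\sum_{\substack{k \in \mathbb{N} \\ P^+(k) > \xi_1}} \frac{\mu(k)^2 \phi((m,k))}{k\,\phi(k)},
\]
and then feed in the chain \eqref{new3}, \eqref{new4}, \eqref{new5}. The factorisation in \eqref{new3} writes this tail as a product of two factors: the full Euler product $\prod_p \bigl(1+\phi((m,p))/(p(p-1))\bigr)$, which by \eqref{new5} is $O(\log\log(10m))$, and the quantity $1-\prod_{p>\xi_1}(1+\phi((m,p))/(p(p-1)))^{-1}$, which by \eqref{new4} is $O\bigl(\log(2m)/(\xi_1 \log \xi_1)\bigr)$. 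Multiplying these two bounds, then multiplying by $\log|g|$ from \eqref{new2} and by $\li(x)/\phi(m) \ll x/(\phi(m)\log x)$ from \eqref{maintermstart}, produces precisely the first error term
\[
O\!\left( \frac{x \log|g|}{\phi(m)\log x} \cdot \frac{\log(2m) \log\log(10m)}{\xi_1 \log \xi_1} \right).
\]

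No real obstacle arises; the lemma is essentially an accounting step that glues together the algebraic input from Lemma \ref{algebra} (through \eqref{maintermstart}), the identification of the main term via Lemma \ref{delta}, and the elementary estimates \eqref{new2}--\eqref{new5}. The only point worth checking is that the Taylor expansions $\log((1+x)^{-1})=O(x)$ and $e^x = 1+O(x)$ used in the derivation of \eqref{new4} are legitimate, which requires the exponent $\sum_{p>\xi_1} \phi((m,p))/(p(p-1))$ to be bounded, say $\leq 1/2$; this is automatic once $\xi_1$ is large enough, and any finite range of small $\xi_1$ is absorbed into the implied constant since $N_{a,m,g}(x,\xi_1) \ll x$ trivially.
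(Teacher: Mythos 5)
Your proposal is correct and follows exactly the same route as the paper: substitute the splitting \eqref{new1} into \eqref{maintermstart}, identify the main term via Lemma \ref{delta}, and bound the tail by combining \eqref{new2}--\eqref{new5}. The extra remark about verifying that the exponent $\sum_{p>\xi_1}\phi((m,p))/(p(p-1))$ is small enough for the Taylor expansions in \eqref{new4} is a sensible check that the paper leaves implicit.
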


\subsection{Conclusion of the proof of Theorem \ref{thm:a.p.'s in a.p.'ssecondversion}}
Combining \eqref{identitystart} with the Lemma's \ref{lem:sum1}, \ref{sum 3 small m}, \ref{lem:sum2}, and \ref{finalmainterm} yields the next result.
\begin{proposition} \label{firstversion}
If we have $m \xi_3 \leq x$, $\xi_3 \geq \e^{\e} \xi_2$ and $4^{\xi_1} \leq x$, and there is an $0<A<1$ such that $m \leq \frac{x^{1-A}}{\xi_3}$, then $\pi_g(x;m,a)- \delta(a,m,g) \li(x)$ is
\begin{align}
 \ll &4^{\xi_1} \sqrt{x} \log x \log |g|+  \frac{x \log |g|}{\phi(m) \log x}  \frac{\log (2m) \log \log (10m)}{\xi_1 \log \xi_1} \label{regel1}
\\&  +\frac{x^2 \log |g|}{{\xi_3}^2 \log (2m)}  +   \sqrt{x} \log x \frac{\xi_2 \log |g|}{\log \xi_2}\label{regel2}
\\&+ \frac{x}{\phi(m) \log x} \l( \log \l( \frac{\log \xi_3}{\log \xi_2} \r) + \frac{\log  m }{\log \xi_2} \r)\label{regel3}
\end{align} 
\end{proposition}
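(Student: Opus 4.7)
The plan is to simply assemble the four estimates already proved in Lemmas \ref{lem:sum1}, \ref{sum 3 small m}, \ref{lem:sum2} and \ref{finalmainterm} via the elementary counting inequality \eqref{identitystart}. Rearranging \eqref{identitystart} gives
$$\pi_g(x;m,a) = N_{a,m,g}(x,\xi_1) + O\l( M_{a,m,g}(x,\xi_1,\xi_2) + M_{a,m,g}(x,\xi_2,\xi_3) + M_{a,m,g}(x,\xi_3,x-1) \r),$$
so subtracting $\delta(a,m,g)\li(x)$ reduces the proposition to bounding the four quantities on the right individually under the stated hypotheses.

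First I would check that each lemma is applicable. The hypothesis $4^{\xi_1}\leq x$ is precisely what is needed for Lemma \ref{finalmainterm}; the hypotheses $\xi_3\geq \e^{\e}\xi_2$ and $m\xi_3\leq x^{1-A}$ are precisely what Lemma \ref{sum 3 small m} requires; and Lemma \ref{lem:sum1} needs no extra hypothesis. With this in place, I would apply Lemma \ref{finalmainterm} to replace $N_{a,m,g}(x,\xi_1)$ by $\delta(a,m,g)\li(x)$ at the cost of the two error terms $\log|g|\cdot 4^{\xi_1}\sqrt{x}\log x$ and $\frac{x\log|g|}{\phi(m)\log x}\cdot \frac{\log(2m)\log\log(10m)}{\xi_1 \log \xi_1}$, which together form line \eqref{regel1}.

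Next I would insert the bound from Lemma \ref{lem:sum1} for $M_{a,m,g}(x,\xi_3,x-1)$, which produces the term $\frac{x^2\log|g|}{\xi_3^2\log(2m)}$ of line \eqref{regel2} (the harmless $+1$ is dominated, e.g., by this same term since $\xi_3<x-1$). Then I would plug in Lemma \ref{lem:sum2} for $M_{a,m,g}(x,\xi_1,\xi_2)$: its second summand $\log|g|\sqrt{x}\log x\,\frac{\xi_2}{\log\xi_2}$ gives the remaining term of \eqref{regel2}, while its first summand $\log|g|\,\frac{\log m}{\phi(m)\xi_1\log\xi_1}\cdot\frac{x}{\log x}$ is absorbed into the second term of \eqref{regel1} (since $\log m\leq \log(2m)\log\log(10m)$). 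Finally I would apply Lemma \ref{sum 3 small m} for $M_{a,m,g}(x,\xi_2,\xi_3)$, which produces line \eqref{regel3} verbatim.

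There is no real obstacle here, since every needed estimate is already in hand; the only thing to watch is the bookkeeping of absorbing the smaller contributions (such as the first summand of Lemma \ref{lem:sum2}) into the terms already listed in \eqref{regel1}--\eqref{regel3}, and verifying that the hypotheses of the four lemmas match those of Proposition \ref{firstversion}. Once this matching is done the proposition follows directly by the triangle inequality.
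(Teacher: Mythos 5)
Your proposal is correct and takes essentially the same route as the paper, whose proof of Proposition \ref{firstversion} is precisely this assembly of Lemmas \ref{lem:sum1}, \ref{sum 3 small m}, \ref{lem:sum2} and \ref{finalmainterm} through \eqref{identitystart}. The only small quibble is your justification for absorbing the $+1$ from Lemma \ref{lem:sum1}: the inequality $\xi_3<x-1$ alone does not control the factor $\log(2m)$ in the denominator, but the hypothesis $m\xi_3\leq x^{1-A}$ gives $x^2/\xi_3^2\geq x^{2A}m^2\gg\log(2m)$, which does.
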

Now we choose our functions $\xi_i$ as follows,
$$\xi_1= \frac{1}{6} \log x, ~~ \xi_2= \sqrt{x}(\log x)^{-2} m^{-1} ,~~\xi_3= \sqrt{x}(\log x)^{2}m.$$ 
Note that the conditions from Proposition \ref{firstversion} are satisfied, since we assumed that $m \ll x^{\epsilon}$ for every $\epsilon>0$. We conclude the proof of Theorem \ref{thm:a.p.'s in a.p.'ssecondversion} by bounding all the error terms in Proposition \ref{firstversion} from above by $\log |g| \frac{x }{\phi(m) \log x} \frac{\max\l\{ \log \log x, \log (2m) \r\}}{\log x }$, the error term in Theorem \ref{thm:a.p.'s in a.p.'ssecondversion}.
\subsubsection{Error terms in \eqref{regel1}}
Note that
$4^{\xi_1}  \sqrt{x} \log x =  x^{\frac{1}{6} \log 4 + \frac{1}{2}} \log x \ll  x^{0.9}.$
Using the estimates $\log \log (10m) \ll \log \log x$ and $\xi_1 \log \xi_1 \gg \log x \log \log x$, we see that the second term in \eqref{regel1} is 
$$\ll \log |g| \frac{x }{\phi(m) \log x} \frac{\log(2m)}{\log x}.
$$

\subsubsection{Error terms in \eqref{regel2}}
The first term in \eqref{regel2} can easily be estimated as follows,
\begin{align*}
\frac{x^2 \log |g|}{{\xi_3}^2 \log (2m)} 
= \frac{x\log |g|}{(\log x)^4 m^2 \log (2m)} \ll \frac{x\log |g|}{ \phi(m)(\log x)^2} .
\end{align*} 
Using the bounds
 $\xi_2\ll \sqrt{x}(\log x)^{-2} \phi(m)^{-1}$
and $\log \xi_2 \gg \log x$, we obtain
  $$\sqrt{x} \log x \frac{\xi_2 \log |g|}{\log \xi_2} \ll \frac{x \log |g|}{\phi(m) (\log x)^2}.
$$

\subsubsection{Error terms in \eqref{regel3}}
In order to prove that the error term in \eqref{regel3} is bounded by the error term in Theorem \ref{thm:a.p.'s in a.p.'ssecondversion}, it suffices to show that
$$\log \l( \frac{\log \xi_3}{\log \xi_2} \r)  + \frac{\log  m }{\log \xi_2}  \ll \log |g| \frac{\max \l\{ \log \log x, \log m \r\}}{\log x}.$$
Since $\log \xi_2 \gg \log x$, we have
$$\frac{\log  m }{\log \xi_2} \ll \frac{\log m}{\log x}.$$
Furthermore, due to our choice of $\xi_2$ and $\xi_3$ we have
\begin{align*}
\log \l( \frac{\log \xi_3}{\log \xi_2} \r)
= \log \l( \frac{1 + \frac{4\log \log x}{\log x} + \frac{2 \log m}{\log x}}{1 - \l( \frac{4\log \log x}{\log x} + \frac{2 \log m}{\log x} \r)} \r).
\end{align*} 
For $|\epsilon|< \frac{1}{2}$, we have
$\log \l( \frac{1+ \epsilon}{1- \epsilon} \r)\ll \epsilon.$
Choosing $\epsilon= \frac{4\log \log x}{\log x} + \frac{2 \log m}{\log x}$, we see that
$$\log \l( \frac{\log \xi_3}{\log \xi_2} \r) \ll \frac{\log \log x}{\log x} + \frac{\log m}{\log x} \ll \frac{\max \l\{ \log \log x, \log m \r\} }{\log x}.$$
We note that the error term from \eqref{regel3} comes from the hardest case in Hooley's argument, and also gives us the largest error term. Because all the error terms in Proposition \ref{firstversion} are of order $\log |g| \frac{x }{\phi(m) \log x} \frac{\max\l\{ \log \log x, \log (2m) \r\}}{\log x \}}$, we have proved
Theorem \ref{thm:a.p.'s in a.p.'ssecondversion}.

\section{Proof of Theorem \ref{ap+b}} \label{proofapplication}

To detect if a number $n \in \mathbb{N}$ is squarefree, we use the identity
\begin{align*} 
\sum \limits_{\substack{ d\in \mathbb{N} \\ d^2 | n }} \mu(d) = 
\begin{cases}
1, & \text{if $n$ is squarefree,} \\
0, & \text{else.}
\end{cases}
\end{align*}

Recall Notation \ref{u_0(d)}. Applying the above to $n=ap+b$ and interchanging the order of summation, we get
\begin{align} \label{pistart}
\pi_{g,a,b}\l(x\r) = \sum \limits_{\substack{p \leq x \\ \langle g \rangle = \mathbb{F}_p^*}}  \sum \limits_{\substack{d \in \mathbb{N} \\ d^2|ap+b}} \mu\l(d\r)
= \sum \limits_{\substack{d \leq \sqrt{ax+b}\\ (a,d)=1}} \mu\l(d\r)  \pi_g\l(x;d^2,u_0(d)\r).
\end{align}
We split the interval $[1, \sqrt{ax+b}]$ in two parts $[1,y]$ and $(y,\sqrt{ax+b}]$, where $y \colonequals \l(\log x\r)^2$. In \eqref{pistart}, the terms with $d>y$ contribute at most $ O \l( \frac{x}{(\log x)^2}+\sqrt{ax+b} \r)$, because we can bound $\pi_g\l(x;d^2,u_0(d)\r)$ by the number of integers $n \leq x$ with $n \equiv u_0(d) \mod d^2$, which is at most $\frac{x}{d^2}+1$. The terms with $(u_0(d),d)>1$ contribute $O \l( \sqrt{ax+b}  \r)$, because for such $d$ we have $ \pi_g\l(x;d^2,u_0(d)\r) \leq 1$. Hence,
\begin{align} \label{usebounds}
 \pi_{g,a,b}(x)= \sum \limits_{\substack{d \leq y\\ (a,d)=1 \\ (u_0(d),d)=1 }} \mu\l(d\r)   \pi_g\l(x;d^2,u_0(d)\r) + O \l( \frac{x}{\l(\log x\r)^2} + \sqrt{ax+b}  \r).
\end{align}
For $d \leq y$ we have $\log(2d^2) \ll \log \log x$, so by Theorem \ref{thm:a.p.'s in a.p.'ssecondversion} we have 
\begin{align} \label{applysw}
\begin{split}
\sum \limits_{\substack{d \leq y\\ (a,d)=1 \\ (u_0(d),d)=1 }} \mu\l(d\r)   \pi_g\l(x;d^2,u_0(d)\r) &= \frac{x}{\log x} \sum \limits_{\substack{d \leq y\\ (a,d)=1 \\ (u_0(d),d)=1 }} \l( \mu\l(d\r)  \delta(u_0(d),d^2,g) \r) 
\\&+O \l(\log |g|   \frac{ x \log \log x}{(\log x)^2}   \r),
\end{split}
\end{align} 
where we use that
$\sum \limits_{d=1}^{\infty} \frac{1}{\phi(d^2)} =O(1).$ This illustrates the usefulness of Theorem \ref{thm:a.p.'s in a.p.'ssecondversion}; the uniformity of the error term on $d$ ensures that the sum of error terms in \eqref{applysw} can be bounded conveniently.\\\\
Since $ \delta\l(u_0(d),d^2,g\r) \ll \frac{1}{\phi\l(d^2\r)} $, the series $\sum \limits_{\substack{d \geq 1}} \mu\l(d\r)     \delta \l(u_0(d),d^2,g\r)$ is absolutely convergent, and
\begin{align*}
\begin{split}
\sum \limits_{\substack{d > y\\ (a,d)=1  \\ (u_0(d),d)=1}}\mu\l(d\r)    \delta \l(u_0(d),d^2,g\r) &\ll \sum \limits_{d > y} \frac{1}{d\phi(d)} \ll \frac{1}{(\log x)^2}.
\end{split}
\end{align*}
Extending the sum in \eqref{applysw} and substituting the result in \eqref{usebounds} we arrive at
\begin{align} \label{endoffirstpart}
\begin{split}
 \pi_{g,a,b}(x)&= \frac{x}{\log x}   \sum \limits_{\substack{d = 1\\(a,d)=1  \\ (u_0(d),d)=1}}^{\infty} \mu\l(d\r)  \delta \l(u_0(d),d^2,g\r) 
+O \l( a (\log |g|)  \frac{ x \log \log x}{ (\log x)^2 } \r)  .
  \end{split}
\end{align}
So far we have followed Walfisz's method \cite{walfisz} to determine the number of ways to write an integer as a sum of a squarefree number and a prime, except for the fact that we applied Theorem \ref{thm:a.p.'s in a.p.'ssecondversion} instead of the Siegel--Walfisz theorem. The remaining part of the proof consists of calculating the sum appearing in \eqref{endoffirstpart}, and is specific for this problem.\\\\ We start by rewriting the conditions in the sum to see that they behave multiplicavely in $d$, which enables us to use Euler products. Recall from Theorem \ref{thm:moree2008} that if $\delta \l(u_0(d),d^2,g\r)>0$, then $(u_0(d)-1,d,h)=1$.
\begin{lemma} \label{technisch} Let $d \in \mathbb{N}$ be squarefree and coprime to $a$, and let $p$ be a prime divisor of $d$.\\
\textbf{i)} We have $(u_0(d)-1,d,h)=1$ if and only if $(a+b,d,h)=1.$\\
\textbf{ii)} We have $p|u_0(d)$ if and only if $p|b$, and we have $p|(u_0(d)-1)$ if and only if $p|(a+b)$.\\
\textbf{iii)} We have $(u_0(d),d)=(b,d)$.
\end{lemma}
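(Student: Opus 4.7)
The plan is to reduce everything to the defining congruence $a u_0(d) \equiv -b \pmod{d^2}$, which in particular implies $a u_0(d) \equiv -b \pmod{p}$ for every prime $p \mid d$. Since $(a,d)=1$ by hypothesis, $a$ is a unit modulo $p$ for each such $p$, and this is the only arithmetic input we will need.

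First I would prove (ii), as it is the engine driving the rest. For a prime $p \mid d$, the equivalences
\[
p \mid u_0(d) \iff a u_0(d) \equiv 0 \pmod p \iff -b \equiv 0 \pmod p \iff p \mid b
\]
follow immediately from $(a,p)=1$. Similarly,
\[
p \mid (u_0(d)-1) \iff a u_0(d) \equiv a \pmod p \iff -b \equiv a \pmod p \iff p \mid (a+b).
\]

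Next, (iii) follows by writing both sides as products over prime divisors: since $d$ is squarefree,
\[
(u_0(d),d) = \prod_{\substack{p \mid d \\ p \mid u_0(d)}} p = \prod_{\substack{p \mid d \\ p \mid b}} p = (b,d),
\]
where the middle equality is exactly the first half of (ii). The same squarefree-factorisation trick yields (i): writing
\[
(u_0(d)-1, d, h) = \prod_{\substack{p \mid d, \, p \mid h \\ p \mid (u_0(d)-1)}} p
\]
and applying the second half of (ii) replaces the condition $p \mid (u_0(d)-1)$ by $p \mid (a+b)$, which gives $(a+b,d,h)$.

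I do not expect a genuine obstacle here; the statement is essentially a bookkeeping consequence of the definition of $u_0(d)$ together with squarefreeness of $d$ and the coprimality $(a,d)=1$. The only mild care needed is to invoke the congruence modulo $p$ (not modulo $d^2$) throughout, so that $(a,p)=1$ lets us divide by $a$ freely.
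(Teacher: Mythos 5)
Your proposal is correct and follows essentially the same route as the paper: both reduce to the defining congruence $au_0(d)+b\equiv 0 \bmod d^2$ read modulo each prime $p\mid d$, use $(a,p)=1$ to divide by $a$, and deduce (i) and (iii) from (ii) via the squarefreeness of $d$. Your version merely makes explicit the use of $(a,p)=1$, which the paper leaves implicit.
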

\begin{proof}
\textbf{i and ii)} By definition of $u_0(d)$ we have $a(u_0(d)-1)+a+b \equiv 0 \mod d^2$, hence $p$ divides $ a (u_0(d)-1) +a+b$. Therefore, $p$ divides $u_0(d)-1$ if and only if $p$ divides $a+b$, from which (i) follows. Since $p$ divides $au_0(d)+b$, we also see that $p|u_0(d)$ if and only if $p|b$.\\
\textbf{iii)} Because $d$ is squarefree, it follows from (ii) that
$(u_0(d),d)=(b,d)$.
\end{proof}
With Theorem \ref{thm:moree2008} in mind, we write $b_M(d) \colonequals \frac{\Delta}{(\Delta,d^2)}$ for $\Delta= \Delta \l(\mathbb{Q} \l(\sqrt{g} \r) \r)$, and
$$\gamma(d) \colonequals \begin{cases}
(-1)^{ \frac{b_M(d)-1}{2} } (d^2,\Delta), & \text{if $b_M(d)$ is odd,}\\
1, & \text{otherwise.}
\end{cases}$$
Combining the observations from Lemma \ref{technisch} with Theorem \ref{thm:moree2008} we find
\begin{align} 
&\sum \limits_{\substack{d =1\\(a,d)=1  \\ (u_0(d),d)=1}}^{\infty} \mu\l(d\r)  \delta \l(u_0(d),d^2,g\r) = \sum \limits_{\substack{d =1\\(ab,d)=1  \\ (a+b,d,h)=1}}^{\infty} \mu\l(d\r) \frac{A(u_0(d),d^2,h)}{\phi(d^2)} \label{somdeel1}
\\&+ \sum \limits_{\substack{d =1\\(ab,d)=1  \\ (a+b,d,h)=1}}^{\infty} \mu\l(d\r) \frac{A(u_0(d),d^2,h)}{\phi(d^2)} \l( \frac{\gamma(d) }{u_0(d)} \r) \frac{\mu(2|b_M(d) |)}{ \prod \limits_{\substack{p|b_M(d)  \\ p|h}} (p-2) \prod \limits_{\substack{p|b_M(d)  \\ p \nmid h}} (p^2-p-1)},\label{somdeel2}
\end{align}
 The sum in \eqref{somdeel2} does not have multiplicative terms, so we cannot write the sum of $\mu\l(d\r)  \delta \l(u_0(d),d^2,g\r)$ as an Euler product. However, we will see that the sums in \eqref{somdeel1} and \eqref{somdeel2} have an Euler product as a common factor.
This helps us to write our sum of $\mu\l(d\r)  \delta \l(u_0(d),d^2,g\r)$ as the product of an Euler product and a sum containing non-multiplicative terms.
\subsection{The sum in \eqref{somdeel1}} \label{s:som1}
For $d \in \mathbb{N}$ squarefree with $(ab,d)=1$ and $(a+b,d,h)=1$, we apply Lemma \ref{technisch} to obtain
\begin{align} \label{gedeelddoorC(h)}
\frac{A(u_0(d),d^2,h)}{C(h)} = \prod \limits_{\substack{p|a+b \\ p|d}} \l(1- \frac{1}{p} \r)  \prod \limits_{\substack{p | d \\ p|h}} \l( \frac{1}{1- \frac{1}{p-1}} \r) \prod \limits_{\substack{p | d \\ p \nmid h}}  \l( \frac{1}{1- \frac{1}{p(p-1)}} \r).
\end{align}
We see that the map $ \psi:\mathbb{N} \to \mathbb{C}$ given by
\begin{align} \label{psi}
 \psi(d):=
\begin{cases}
\frac{A(u_0(d),d^2,h)}{C(h)}, & \text{if } (ab,d)=1 \text{ and } (a+b,d,h)=1,\\
0, & \text{else,}
\end{cases}
\end{align}
is multiplicative, hence we have an Euler product
\begin{align} \label{hugeeulerproduct}
\begin{split}
&\sum \limits_{\substack{d =1\\(ab,d)=1  \\ (a+b,d,h)=1}}^{\infty} \mu\l(d\r) \frac{A(u_0(d),d^2,h)}{\phi(d^2)} 
\\& =C(h) \prod \limits_{\substack{p \nmid ab \\ p|a+b \\ p \nmid h}} \l( 1 - \frac{p-1}{p^3-p^2-p}  \r)
\prod \limits_{\substack{p \nmid ab \\ p \nmid a+b \\ p | h}} \l( 1 - \frac{1}{p^2-2p}  \r) 
\prod \limits_{\substack{p \nmid ab \\ p \nmid a+b \\ p \nmid h}} \l( 1 - \frac{1}{p^2-p-1} \r).
\end{split}
\end{align}
\subsection{The sum in \eqref{somdeel2}} \label{s:som2}
We write all the $d$ in \eqref{somdeel2} as $d=d_0d_1$ where $d_0=(\Delta,d)$.
We show that the expressions $\l( \frac{\gamma(d)}{u_0(d)} \r)$ and $\mu(2|b_M(d)|)$, appearing in \eqref{somdeel2} only depend on $d_0$ and not on $d_1$. This will enable us to write the sum in \eqref{somdeel2} as a product of a sum over $d_0$ and an Euler product in $d_1$.
\begin{lemma} \label{gamma}
For $d \in \mathbb{N}$ squarefree and such that $(a,d)=1$, we have
$$\l( \frac{\gamma}{u_0(d)} \r) = \l( \frac{\gamma}{-ab} \r),$$
where $\gamma:= \gamma(d)$.
\end{lemma}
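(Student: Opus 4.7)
The plan is to reduce the identity to multiplicativity and periodicity of the Kronecker symbol combined with the defining congruence $au_0(d) \equiv -b \mod d^2$. The key preliminary fact to establish is that $\gamma(d)$ is always a (generalised) discriminant, i.e., $\gamma(d) \equiv 0$ or $1 \mod 4$; this is exactly what guarantees that $\l(\frac{\gamma(d)}{\cdot}\r)$ is a Dirichlet character of period dividing $|\gamma(d)|$ and is multiplicative in the lower argument.

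To verify $\gamma(d) \equiv 0,1 \mod 4$ I would split into cases. If $b_M(d)$ is even then $\gamma(d)=1$ trivially. If $b_M(d)$ is odd and $(d^2,\Delta)$ is divisible by $4$, then $\gamma(d) = \pm(d^2,\Delta)$ is divisible by $4$. Finally, if $b_M(d)$ is odd and $(d^2,\Delta)$ is odd, then $\Delta = b_M(d)(d^2,\Delta)$ is odd, and being the discriminant of a quadratic field this forces $\Delta \equiv 1 \mod 4$; a short check of the two possibilities $b_M(d) \equiv 1, 3 \mod 4$ then shows that the sign $(-1)^{(b_M(d)-1)/2}$ is exactly what is needed so that $\gamma(d) \equiv 1 \mod 4$.

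With this in hand, note that $|\gamma(d)|$ divides $(d^2,\Delta)$, and hence divides $d^2$. Therefore the defining congruence $au_0(d) \equiv -b \mod d^2$ reduces modulo $|\gamma(d)|$ to $au_0(d) \equiv -b \mod |\gamma(d)|$, and periodicity and multiplicativity of the Kronecker symbol in its lower argument give
\[
\l(\frac{\gamma(d)}{a}\r) \l(\frac{\gamma(d)}{u_0(d)}\r) = \l(\frac{\gamma(d)}{au_0(d)}\r) = \l(\frac{\gamma(d)}{-b}\r).
\]
Since $(a,d)=1$ and $|\gamma(d)|\mid d^2$ we have $(a,|\gamma(d)|)=1$, so $\l(\frac{\gamma(d)}{a}\r)=\pm 1$. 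Multiplying the displayed identity by $\l(\frac{\gamma(d)}{a}\r)$ and using multiplicativity once more yields
\[
\l(\frac{\gamma(d)}{u_0(d)}\r) = \l(\frac{\gamma(d)}{a}\r) \l(\frac{\gamma(d)}{-b}\r) = \l(\frac{\gamma(d)}{-ab}\r),
\]
which is the desired identity. (In the boundary case $(\gamma(d),b)>1$ both sides vanish, since the defining congruence forces $(\gamma(d),u_0(d))>1$ as well.)

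The main obstacle is really the first step: one has to check that the ad hoc sign $(-1)^{(b_M(d)-1)/2}$ in the definition of $\gamma(d)$ is precisely what is needed so that $\gamma(d)$ is always a discriminant. Once that mod-$4$ bookkeeping is settled, the remaining manipulation of the Kronecker symbol is entirely routine.
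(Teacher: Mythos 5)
Your proof is correct and follows the same underlying strategy as the paper: establish that $\gamma(d)$ is a discriminant so that $\l(\frac{\gamma(d)}{\cdot}\r)$ is periodic modulo $|\gamma(d)|$, reduce the defining congruence $au_0(d)\equiv -b \pmod{d^2}$ modulo $|\gamma(d)|$, and then unwind via multiplicativity together with $\l(\frac{\gamma(d)}{a}\r)=\pm1$. The one genuine difference is how you obtain the discriminant property: the paper simply cites Lemma 2.4 of Moree for the stronger assertion that $\gamma$ is a fundamental discriminant, together with Cohen's Theorem 2.2.15 for the periodicity of $\l(\frac{D}{\cdot}\r)$; you instead verify directly, by a mod-$4$ case analysis, the weaker but sufficient condition $\gamma(d)\equiv 0,1\pmod 4$. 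Your route is more self-contained and elementary, at the cost of needing the mildly more general periodicity statement ``$D\equiv 0,1\pmod 4 \Rightarrow \l(\frac{D}{\cdot}\r)$ is periodic with period dividing $|D|$'' rather than the fundamental-discriminant version in Cohen. One small point you should make explicit: in the case $b_M(d)$ odd you only treat $4\mid (d^2,\Delta)$ and $(d^2,\Delta)$ odd; the remaining possibility $(d^2,\Delta)\equiv 2\pmod 4$ must be excluded, which follows because then $\Delta = b_M(d)\cdot(d^2,\Delta)\equiv 2\pmod 4$ would contradict $\Delta$ being the discriminant of a quadratic field. Also, the parenthetical about the boundary case $(\gamma(d),b)>1$ is unnecessary, since the identity $\l(\frac{\gamma(d)}{a}\r)\l(\frac{\gamma(d)}{u_0(d)}\r)=\l(\frac{\gamma(d)}{-b}\r)$ and the final multiplication handle that case uniformly (both sides are then $0$). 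Finally, note that multiplicativity of the Kronecker symbol in its lower argument holds for any upper argument, not only for $D\equiv 0,1\pmod 4$; the mod-$4$ condition is needed only for the periodicity.
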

\begin{proof} Theorem 2.2.15 from \cite{cohen} states that if $D$ is a fundamental discriminant, then the Kronecker symbol $\l( \frac{D}{ \cdot} \r)$ is periodic modulo $|D|$. 
As stated in the proof of Lemma 2.4 from \cite{moree2008}, $\gamma$ is a fundamental discriminant, so $\l( \frac{\gamma}{\cdot} \r)$ is periodic mod $|\gamma|$. Now $\gamma$ divides $d^2$, while $au_0(d)+b \equiv 0 \mod d^2$, hence 
$au_0(d) \equiv - b \mod |\gamma|$, so by periodicity we get $$\l( \frac{\gamma}{a} \r) \l( \frac{\gamma}{u_0(d)} \r)= \l( \frac{\gamma}{au_0(d)} \r)= \l( \frac{\gamma}{-b} \r).$$ Because $(a,d)=1$ we have $(a,\gamma)=1$, hence
$\pm 1= \l( \frac{\gamma}{a} \r)= \l( \frac{\gamma}{a} \r)^{-1}$. We conclude that
$$ \l( \frac{\gamma}{u_0(d)} \r) = \frac{\l( \frac{\gamma}{-b} \r)}{\l( \frac{\gamma}{a} \r)}
= \l( \frac{\gamma}{-b} \r) \l( \frac{\gamma}{a} \r)= \l( \frac{\gamma}{-ab} \r).$$
\end{proof}

%The definitions of $b_M$ and $\gamma$ suggest that they only depend on $d_0$ and not on $d_1$. This will enable us to write the sum in \eqref{somdeel2} as a product of a sum over $d_0$ and a sum over $d_1$, and we will write the latter as an Euler product.
% We further write
%$$d_0'= \begin{cases}
%d_0, & \text{if $d_0$ is odd,}\\
%2d_0, & \text{if $d_0$ is even.}
%\end{cases}$$
Notice that for $d$ as in Lemma \ref{gamma}, we have $(d^2 , \Delta) = (d_0^2, \Delta)$, hence
 $b_M(d)=  b_M(d_0)$. Combined with Lemma \ref{gamma} this shows that $\gamma(d)= \gamma(d_0)$ and $\l( \frac{\gamma(d)}{u_0(d)} \r) = \l( \frac{\gamma(d_0)}{-ab} \r)$.
% Because $d$ and $g_1$ are squarefree we see that
%\begin{align*}
%(d^2 , \Delta) = (d_0^2, \Delta)
%&= \l(4^{\nu_2(d_0)} \prod\limits_{\substack{p|d_0\\ p \neq 2}} p^2, 2^{\nu_2(\Delta)} \prod \limits_{\substack{p|\Delta\\ p \neq 2}} p \r)
%\\&= \l(4^{\nu_2(d_0)} \prod\limits_{\substack{p|d_0\\ p \neq 2}} p  , 2^{\nu_2(\Delta)} \prod \limits_{\substack{p|\Delta\\ p \neq 2}} p \r)
%= (d_0', \Delta).
%\end{align*}
%
%Thus we have
%$b_M(d)=  b_M(d_0)= \frac{\Delta}{(d_0',\Delta)}$ and $(d^2, \Delta)=(d_0^2,\Delta)$, hence $\gamma(d)= \gamma(d_0)$.
Using these observations and the fact that the map $\psi$ defined in \eqref{psi} is multiplicative, we get
\begin{align} \label{sum2outcome}
 &\sum \limits_{\substack{d =1\\(ab,d)=1  \\ (a+b,d,h)=1}}^{\infty} \mu\l(d\r) \frac{A(u_0(d),d^2,h)}{\phi(d^2)} \l( \frac{\gamma(d)}{u_0(d)} \r) \frac{\mu(2|b_M(d)|)}{ \prod \limits_{\substack{p|b_M(d) \\ p|h}} (p-2) \prod \limits_{\substack{p|b_M(d) \\ p \nmid h}} (p^2-p-1)} =
 \\& C(h)\sum \limits_{\substack{d_0 | \Delta \\(ab,d_0)=1  \\ (a+b,d_0,h)=1}} \mu\l(d_0\r) \frac{A(u_0(d_0),d_0^2,h)}{C(h) \phi(d_0^2)} \l( \frac{\gamma(d_0)}{-ab} \r) \frac{\mu(2|b_M(d_0)|)}{ \prod \limits_{\substack{p|b_M(d_0) \\ p|h}} (p-2) \prod \limits_{\substack{p|b_M(d_0) \\ p \nmid h}} (p^2-p-1)} \times \label{sumoverd_0}
 \\& \times \sum \limits_{\substack{(d_1,\Delta)=1 \\(ab,d_1)=1  \\ (a+b,d_1,h)=1}} \mu\l(d_1\r) \frac{A(u_0(d_1),d_1^2,h)}{C(h) \phi(d_1^2)}. \label{sumoverd_1}
%\\&C(h)\sum \limits_{\substack{d_0 | \Delta \\(ab,d_0)=1  \\ (a+b,d_0,h)=1}} \mu\l(d_0\r) \frac{A(u_0(d_0),d_0^2,h)}{C(h) \phi(d_0^2)} \l( \frac{\gamma(d_0)}{-ab} \r) \frac{\mu(2|b_M(d_0)|)}{ \prod \limits_{\substack{p|b_M(d_0) \\ p|h}} (p-2) \prod \limits_{\substack{p|b_M(d_0) \\ p \nmid h}} (p^2-p-1)} \cdot 
% \\& \prod \limits_{\substack{p \nmid ab \\ p|a+b \\ p \nmid h \\ p \nmid \Delta}} \l( 1 - \frac{p-1}{p^3-p^2-p}  \r)
%\prod \limits_{\substack{p \nmid ab \\ p \nmid a+b \\ p | h \\ p \nmid \Delta}} \l( 1 - \frac{1}{p^2-2p}  \r) 
%\prod \limits_{\substack{p \nmid ab \\ p \nmid a+b \\ p \nmid h \\ p \nmid \Delta}} \l( 1 - \frac{1}{p^2-p-1} \r).
\end{align}
The sum in \eqref{sumoverd_1} is equal to the Euler product
\begin{align} \label{gemfactor}
\prod \limits_{\substack{p \nmid ab \\ p|a+b \\ p \nmid h \\ p \nmid \Delta}} \l( 1 - \frac{p-1}{p^3-p^2-p}  \r)
\prod \limits_{\substack{p \nmid ab \\ p \nmid a+b \\ p | h \\ p \nmid \Delta}} \l( 1 - \frac{1}{p^2-2p}  \r) 
\prod \limits_{\substack{p \nmid ab \\ p \nmid a+b \\ p \nmid h \\ p \nmid \Delta}} \l( 1 - \frac{1}{p^2-p-1} \r).
\end{align} 
%In the sum in \eqref{sumoverd_0} we can take $d_0$ such that $b_M(d_0)$ is odd, because otherwise $\mu(2|b_M(d_0)|=0$. An odd prime $p$ divides $b_M(d_0)$ if and only if it divides $\Delta$, but not $d_0$. 
%Therefore, the expression in \eqref{sumoverd_0} equals
%\begin{align*}
%&\frac{C(h)}{\prod \limits_{\substack{p|\Delta  \\ p|h}} (p-2) \prod \limits_{\substack{p|\Delta  \\ p \nmid h}} (p^2-p-1)} \cdot 
%\\&\sum \limits_{\substack{d_0 | \Delta \\(ab,d_0)=1  \\ (a+b,d_0,h)=1}} \mu\l(d_0\r) \frac{A(u_0(d_0),d_0^2,h)}{C(h) \phi(d_0^2)}\prod \limits_{\substack{p|\Delta \\ p|d_0 \\ p|h}} (p-2) \prod \limits_{\substack{p|\Delta \\ p|d_0 \\ p \nmid h}} (p^2-p-1)  \l( \frac{\gamma(d_0)}{-ab} \r) \mu(2|b_M(d_0)|).
%\end{align*}

\subsection{Putting it all together}

We substitute the equations \eqref{hugeeulerproduct}-\eqref{gemfactor} in the equation \eqref{somdeel1}-\eqref{somdeel2}. Notice that the terms in \eqref{somdeel1} and \eqref{somdeel2} have a common factor, namely $C(h)$ times the Euler product in \eqref{gemfactor}. We get that 
$\sum \limits_{\substack{d =1\\(a,d)=1  \\ (u_0(d),d)=1}}^{\infty} \mu\l(d\r)  \delta \l(u_0(d),d^2,g\r)$ equals
\begin{align*} 
\\& C(h)\prod \limits_{\substack{p \nmid ab \\ p|a+b \\ p \nmid h \\ p \nmid \Delta}} \l( 1 - \frac{p-1}{p^3-p^2-p}  \r)
\prod \limits_{\substack{p \nmid ab \\ p \nmid a+b \\ p | h \\ p \nmid \Delta}} \l( 1 - \frac{1}{p^2-2p}  \r) 
\prod \limits_{\substack{p \nmid ab \\ p \nmid a+b \\ p \nmid h \\ p \nmid \Delta}} \l( 1 - \frac{1}{p^2-p-1} \r) \times
\\& \times \Bigg(
 \prod \limits_{\substack{p \nmid ab \\ p|a+b \\ p \nmid h \\ p | \Delta}} \l( 1 - \frac{p-1}{p^3-p^2-p}  \r)
\prod \limits_{\substack{p \nmid ab \\ p \nmid a+b \\ p | h \\ p | \Delta}} \l( 1 - \frac{1}{p^2-2p}  \r) 
\prod \limits_{\substack{p \nmid ab \\ p \nmid a+b \\ p \nmid h \\ p | \Delta}} \l( 1 - \frac{1}{p^2-p-1} \r)
\\& +\sum \limits_{\substack{d_0 | \Delta \\(ab,d_0)=1  \\ (a+b,d_0,h)=1}} \mu\l(d_0\r) \frac{A(u_0(d_0),d_0^2,h)}{C(h) \phi(d_0^2)} \l( \frac{\gamma(d_0)}{-ab} \r) \frac{\mu(2|b_M(d_0)|)}{ \prod \limits_{\substack{p|b_M(d_0) \\ p|h}} (p-2) \prod \limits_{\substack{p|b_M(d_0) \\ p \nmid h}} (p^2-p-1)} 
\Bigg).
\end{align*}
This can be written as
\begin{align*} 
&\prod \limits_{\substack{p \nmid ab \\ p|a+b \\ p \nmid h \\ p \nmid \Delta}} \l( 1 - \frac{p-1}{p^3-p^2-p}  \r)
\prod \limits_{\substack{p \nmid ab \\ p \nmid a+b \\ p | h \\ p \nmid \Delta}} \l( 1 - \frac{1}{p^2-2p}  \r) 
\prod \limits_{\substack{p \nmid ab \\ p \nmid a+b \\ p \nmid h \\ p \nmid \Delta}} \l( 1 - \frac{1}{p^2-p-1} \r) \times
\\& 
\times \sum \limits_{\substack{d_0 | \Delta \\(ab,d_0)=1  \\ (a+b,d_0,h)=1}} \mu\l(d_0\r) \frac{A(u_0(d_0),d_0^2,h)}{\phi(d_0^2)} \l( 1+ \l( \frac{\gamma(d_0)}{-ab} \r) \frac{\mu(2|b_M(d_0)|)}{ \prod \limits_{\substack{p|b_M(d_0) \\ p|h}} (p-2) \prod \limits_{\substack{p|b_M(d_0) \\ p \nmid h}} (p^2-p-1)}  \r).
\end{align*}
Recalling Theorem \ref{thm:moree2008}, Definition \ref{deltanatural} and Lemma \ref{technisch} (ii), we can rewrite the above equation as 
\begin{align*} 
\sum \limits_{\substack{d =1\\(a,d)=1  \\ (u_0(d),d)=1}}^{\infty} \mu\l(d\r)  \delta \l(u_0(d),d^2,g\r) =
  \prod \limits_{ \substack{ p \nmid \Delta \\ p \nmid ab \\ p \nmid (a+b,h)  }} \l(1- \delta^{\natural}(u_0(p),p^2,g) \r) \sum \limits_{\substack{d_0 | \Delta \\ (a,d_0)=1}} \mu\l(d_0\r)  \delta (u_0(d_0),d_0^2,g).
\end{align*}
\begin{lemma}
We have
$$\sum \limits_{\substack{d_0 | \Delta \\ (a,d_0)=1}} \mu\l(d_0\r)  \delta (u_0(d_0),d_0^2,g)= \sum \limits_{\substack{ c \mod |\Delta| \\ \forall p: p| \Delta \implies p^2 \nmid ac+b}} \delta(c, |\Delta|,g).$$
\end{lemma}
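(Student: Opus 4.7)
The proof is a Möbius inversion on the prime divisors of $\Delta$. First, I would expand
\[
\mathbf{1}\!\bigl[\forall p \mid \Delta:\ p^2 \nmid ac+b\bigr] = \sum_{d_0 \mid \Delta} \mu(d_0)\,\mathbf{1}[d_0^2 \mid ac+b]
\]
and swap sums, so that the right-hand side of the lemma becomes
\[
\sum_{d_0 \mid \Delta} \mu(d_0) \sum_{\substack{c \bmod |\Delta| \\ d_0^2 \mid ac+b}} \delta(c,|\Delta|,g).
\]
Terms with $(a,d_0) > 1$ then vanish: a prime $p \mid (a,d_0)$ would force $p \mid b$ via $d_0^2 \mid ac+b$, contradicting $(a,b) = 1$. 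Hence only squarefree $d_0 \mid \Delta$ with $(a,d_0) = 1$ contribute, matching the range on the left-hand side.

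The heart of the argument is to establish, for each such $d_0$, that
\[
\sum_{\substack{c \bmod |\Delta| \\ d_0^2 \mid ac+b}} \delta(c,|\Delta|,g) \;=\; \delta(u_0(d_0),d_0^2,g).
\]
My plan is to interpret both sides as densities of primes $r$ with primitive root $g$ satisfying congruence conditions and to match them by passing to the common modulus $L = [|\Delta|, d_0^2]$. Using the refinement identity $\delta(c,m,g) = \sum_{c' \equiv c \bmod m} \delta(c',m',g)$, valid whenever $m \mid m'$, each side reassembles as $\sum_{c' \bmod L,\ c' \equiv u_0(d_0) \bmod d_0^2} \delta(c',L,g)$, which completes the core identity. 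Summing over $d_0 \mid \Delta$ with the $\mu$-weights then recovers the left-hand side of the lemma.

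The main obstacle is carrying out the density matching when $d_0^2 \nmid |\Delta|$. Because $\Delta$ is a fundamental discriminant (essentially squarefree or four times squarefree), odd prime divisors of $\Delta$ appear to the first power only, so for most $d_0 \mid \Delta$ the modulus $d_0^2$ is not a multiple of $|\Delta|$. A careful Chinese Remainder Theorem argument is needed to check that the condition ``$d_0^2 \mid ac+b$'' imposed on a class $c$ modulo $|\Delta|$ lifts, after passage to $L$, to precisely the $L$-classes congruent to $u_0(d_0)$ modulo $d_0^2$, with no spurious inclusions or exclusions. Once this CRT bookkeeping is settled, the rest of the proof is purely formal.
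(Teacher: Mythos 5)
Your proposal is correct and follows essentially the same route as the paper: a M\"obius/inclusion--exclusion expansion over squarefree $d_0 \mid \Delta$ (with the terms $(a,d_0)>1$ dying for exactly the reason you give), followed by the identification of the inner sum with $\delta(u_0(d_0),d_0^2,g)$. If anything you are more careful than the paper on the final step, where the text simply asserts $\sum_{c \bmod |\Delta|,\, c \equiv u_0(d_0) \bmod d_0^2}\delta(c,|\Delta|,g)=\delta(u_0(d_0),d_0^2,g)$ without the passage to the common modulus $[|\Delta|,d_0^2]$ that you rightly flag as necessary when $d_0^2 \nmid |\Delta|$.
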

\begin{proof}
Since $a$ and $b$ are coprime, primes dividing $ac+b$ cannot divide $a$. By the inclusion-exclusion principle we get
\begin{align*}
\sum \limits_{\substack{ c \mod |\Delta| \\ \forall p: p| \Delta \implies p^2 \nmid ac+b}} \delta(c, |\Delta|,g)&= \sum \limits_{\substack{ c \mod |\Delta| }} \delta(c, |\Delta|,g) - \sum \limits_{\substack{ c \mod |\Delta| \\ \exists p: p| \Delta , ~p^2 | ac+b, ~ p \nmid a}} \delta(c, |\Delta|,g)
\\&= \sum \limits_{\substack{d_0| \Delta \\ (a,d_0)=1 }} \mu(d_0) \sum \limits_{\substack{c \mod |\Delta| \\ d_0^2|ac+b}} \delta(c, |\Delta|,g).
\end{align*}
Assume $(a,d_0)=1$ and $d_0| \Delta$. Then we have $d_0^2|(ac+b)$ if and only if \\ $c \equiv u_0(d_0) \mod d_0^2$, by Notation \ref{u_0(d)}, so in this case we have
$$\sum \limits_{\substack{c \mod |\Delta| \\ d_0^2|ac+b}} \delta(c, |\Delta|,g)=
\sum \limits_{\substack{c \mod |\Delta| \\ c \equiv u_0(d_0) \mod d_0^2}} \delta(c, |\Delta|,g) = \delta(u_0(d_0),d_0^2,g).$$
\end{proof}
From this lemma we obtain
\begin{align*}
\sum \limits_{\substack{d =1\\(a,d)=1  \\ (u_0(d),d)=1}}^{\infty} \mu\l(d\r)  \delta \l(u_0(d),d^2,g\r)= \prod \limits_{ \substack{ p \nmid \Delta \\ p \nmid ab \\ p \nmid (a+b,h)  }} \l(1- \delta^{\natural}(u_0(p),p^2,g) \r)  \sum \limits_{\substack{ c \mod |\Delta| \\ \forall p: p| \Delta \implies p^2 \nmid ac+b}} \delta(c, |\Delta|,g).
\end{align*} 
Substituting this in \eqref{endoffirstpart} completes the proof of Theorem \ref{ap+b}.
Also we notice that
\begin{align*}
 &\prod \limits_{ \substack{ p \nmid \Delta \\ p \nmid ab \\ p \nmid (a+b,h)  }} \l(1- \delta^{\natural}(u_0(p),p^2,g) \r)=\\&  \prod \limits_{\substack{p \nmid ab \\ p|a+b \\ p \nmid h \\ p \nmid \Delta}} \l( 1 - \frac{p-1}{p^3-p^2-p}  \r)
\prod \limits_{\substack{p \nmid ab \\ p \nmid a+b \\ p | h \\ p \nmid \Delta}} \l( 1 - \frac{1}{p^2-2p}  \r) 
\prod \limits_{\substack{p \nmid ab \\ p \nmid a+b \\ p \nmid h \\ p \nmid \Delta}} \l( 1 - \frac{1}{p^2-p-1} \r)>0,
\end{align*}
which we needed for the proof of Corollary \ref{cor:ap+b}.
\begin{acknowledgements}
This paper is a continuation of the author's bachelor thesis, which was supervised by E. Sofos at Leiden University. The author would like to thank him for his advice on the mathematics and the writing of both the thesis and this paper.
\end{acknowledgements}

\bibliographystyle{IEEEtrans} 
\bibliography{Refmainterm}

\end{document}